\newtheorem{theorem}{Theorem}[section]
\newtheorem{lemma}[theorem]{Lemma}
\newtheorem{proposition}[theorem]{Proposition}
\newtheorem{corollary}[theorem]{Corollary}
\newtheorem{definition}[theorem]{Definition}
\newtheorem{Property}{Property}[section]
\newtheorem{remark}[theorem]{Remark}
\theoremstyle{definition}
\theoremstyle{definition} 
\newtheorem{Example}{Example}[section]
\def\P{\mathbb{P}}
\def\R{\mathbb{R}}
\newcommand\M{{\cal M}}
\begin{document}

\begin{center}
{\LARGE New results on the order of functions at infinity}

\bigskip

Meitner Cadena$^{1}$, Marie Kratz$^{2}$ and Edward Omey$^{3}$

\bigskip
\end{center}

$^{1}$ Universidad de las Fuerzas Armadas, Dept. de Ciencias Exactas,
Sangolqui, Ecuador [\textit{mncadena2@espe.edu.ec}]

$^{2}$ ESSEC Business School, CREAR\ risk research center [\textit{kratz@essec.edu}]

$^{3}$ KU\ Leuven @ Campus Brussels [\textit{edward.omey@kuleuven.be}]

\begin{center}
\textbf{Abstract} 
\end{center}
Recently, new classes of positive and measurable functions, $\M(\rho)$ and $\M(\pm \infty)$, have been defined  in terms of their asymptotic behaviour at infinity, when normalized by a logarithm (Cadena et al., 2015, 2016, 2017).  Looking for other suitable normalizing functions than logarithm seems quite natural. It is what is developed in this paper, 
studying new classes of functions of the type $\displaystyle \lim_{x\rightarrow \infty}\log U(x)/H(x)=\rho <\infty$ for a large class of normalizing functions $H$. It provides subclasses of $\M(0)$ and $\M(\pm\infty)$.
%
%

\section{Introduction}

Recently Cadena et al. (see \cite{CadenaRevisit-2015,CadenaEstim-2015,CadenaEstim-2015,CadenaT-2016,CK2015,CK2016,CKO2017}) introduced and studied the class of positive and measurable functions with support 
$\mathbb{R}^{+}$, bounded on finite intervals, such that
\begin{equation}\label{1}
\lim_{x\rightarrow \infty }\frac{\log U(x)}{\log x}=\rho 
\end{equation}
where $\rho $ is a finite real number, called {\it the order of the function $U$} given the use of such functions in complex analysis and entire functions. Notation: $U\in \mathcal{M}(\rho )$.
Relations of the form \eqref{1} extend the class of regularly varying functions. 
Recall that a positive and measurable function $U$ is regularly varying at infinity and with real index $\rho $, denoted by $U\in RV_{\rho }$, if it satisfies
\begin{equation}\label{2}
\lim_{x\rightarrow \infty }\frac{U(tx)}{U(x)}=t^{\rho }\text{, }\forall t>0 .
\end{equation}
It has already been proved in \cite{CKO2017} that $U\in RV_{\rho } \Rightarrow U\in \mathcal{M}(\rho )$ but that the converse is false. We also considered there $O-$ type of relations and, among
others, studied functions for which $\lim \sup_{x\rightarrow \infty }\log
U(x)/\log x<\infty$.

It is clear that \eqref{1} and \eqref{2} also make sense when $\rho =\infty $ or $\rho
=-\infty$. In general, it is not simple to characterize the corresponding
classes of functions $\mathcal{M}(\pm \infty )$, as seen in \cite{CKO2017}. It is also quite natural to look for other suitable normalizing function in \eqref{1}, than $\log x$. 
It turns out that there are different types of interesting relations. The first extension leads to a subclass of $\mathcal{M}(0)$, while the second one produces a family of subclasses of $\mathcal{M}(\pm \infty )$.
More precisely, in this paper we study functions $U$ characterized by the relation
\begin{equation}\label{eq:limH}
\lim_{x\rightarrow \infty}\frac{\log U(x)}{H(x)}=\rho <\infty 
\end{equation} 
for a large class of functions $H$.

This type of behavior can be encountered in various examples, hence our motivation for this general study. For instance, Bingham et al. considered \eqref{eq:limH} for $H\in RV_{\alpha }$ (see  (7.3.2) in \cite{BinghamGT1989}). 
Such property \eqref{eq:limH} appears also in connection with entire (complex) functions of the form $f(z)=\sum_{n=0}^{\infty }a_{n}z^{n}$ (see \cite{CKO2017}). 
Another example is the case of classical proximate order functions $\rho (.)$ defined by $\rho (x)=\log g(x)/\log x$, where $g$ is a regularly varying function. The class of functions $U$ satisfying  \eqref{eq:limH} lead then to {\it generalized proximate order functions} of the form $\rho (x)=\log g(x)/H(x)$, where $H$
may be different from the logarithmic function.
Another case, considered by several authors, concerns semi-exponential tail distributions of the form $\displaystyle \overline{F}(x)=\P[X>x]=A(x)\exp\{ -B(x)x^{\alpha }\}$ where $A,B\in RV_{0}$ and $0<\alpha \leq 1$. Clearly we have $\displaystyle -\log \overline{F}(x) = -\log A(x)+B(x)x^{\alpha }\underset{x\to\infty}{\thicksim} B(x)x^{\alpha }$ and \eqref{eq:limH}  holds. Gantert (see \cite{gantert98}) used those functions to obtain asymptotic expressions for $ \frac{\log \P[S_{n}/n\in A]}{n^{\alpha }B(n)}$, as $n\rightarrow \infty$, where $S_{n}$ denotes the $n$th partial sum. 
Let us mention a last recent example. 
In \cite{mimica}, Mimica considers the Laplace-Stieltjes transform $\displaystyle f(z)=\int_{[0,\infty )}\!\!\!\!\!e^{-zx}dF(x)$ where $F(.)$ is a measure on $[0,\infty)$. One of his main results provides conditions on $f(.)$ to make sure that 
\[
\lim_{x\rightarrow \infty }\frac{\log F(x,\infty )}{H(x)}=\alpha \text{,} 
\]
where $\alpha $ is a constant and $H$ is the identity function $H(x)=x$.\\[1ex]

\section{The classes $\mathcal{M}_{0}(L,\protect\rho )$ and $\mathcal{M}_{0}^{\pm }(L,\protect\rho )$}

\subsection{Definition}

In our effort to generalize \eqref{1}, we introduce the following classes of
functions. Throughout the paper, $L(.)$ denotes a positive and measurable function.

\begin{definition}
Consider the class of positive and measurable functions $U$ with support $\R^{+}$, bounded on finite intervals, such that  
\begin{equation} \label{3}
\lim_{x\rightarrow \infty }\frac{\log U(x)}{\int_{a}^{x}L(t)t^{-1}dt}=\rho 
\end{equation}
where $\rho$ and $a$ are real numbers. \\
%
If $\int_{a}^{\infty }L(t)t^{-1}dt<\infty $,
we consider functions $U$ for which we have
\begin{equation} \label{4}
\lim_{x\rightarrow \infty }\frac{\log U(x)}{\int_{x}^{\infty }L(t)t^{-1}dt} =\rho ,
\end{equation}
\end{definition}
\begin{remark}~
\begin{enumerate}
\item If $\displaystyle L(x) \underset{x\to\infty}{\rightarrow} c$, with $0<c<\infty$, then \eqref{3} implies
that $U\in \mathcal{M}(c\rho )$.
\item If \eqref{3} holds with $\int_{a}^{\infty }L(t)t^{-1}dt<\infty $, then $\log
U(x)\underset{x\to\infty}{\rightarrow} c<\infty$, a case with not so much interest.
\end{enumerate}
\end{remark}

Let us consider various possible behaviors for the positive and measurable function $L$ that we present as  different sets of assumptions on $L$. 

\begin{itemize}
\item[]\textbf{Assumption A.} 
Assume that 
$$
L(x) \underset{x\to\infty}{\rightarrow} 0 \quad \text{and} \quad  \int_{a}^{\infty }L(t)t^{-1}dt=\infty \quad
(a\in\R).
$$
\end{itemize}
Under Assumption~A, the class of functions satisfying \eqref{3} will be denoted by $\mathcal{M}_{0}(L,\rho )$.
\begin{itemize}
\item[]\textbf{Assumption B.}  Assume that \;
$\displaystyle 
L(x)\, {\rightarrow} \,\infty  \quad \text{as}\; x\to \infty
$.\\
Note that this assumption implies that $\displaystyle \int_{a}^{\infty }L(t)t^{-1}dt=\infty $.
\end{itemize}
Under Assumption~B, the class of functions satisfying \eqref{3} will be denoted by $\mathcal{M}_{0}^{+}(L,\rho )$.
\begin{itemize}
\item[]\textbf{Assumption C.} Assume that \;
$\displaystyle \int_{a}^{\infty }L(t)t^{-1}dt<\infty$,\; with $a\in\R$.
\end{itemize}
Under Assumption~C, the class of functions satisfying \eqref{4} will be denoted by $\mathcal{M}_{0}^{-}(L,\rho )$. 
This class of functions will be discussed only briefly in this paper.

\subsection{Examples}

Standard functions can be encountered in those classes, as we can see in the following examples.\\[-4ex]
\begin{enumerate}
\item If $U(x)=(\log x)^{\rho }$, then $U\in \mathcal{M}_{0}(L,\rho )$ with $L(x)=1/\log x$.
\item Let $U(x)=e^{\rho\, (\log x)^{\beta}}$.\\
If  $0<\beta <1$, then $U\in \mathcal{M}(0)\cap \mathcal{M}_{0}(L,\rho )$ with $L(x)=\beta (\log
x)^{\beta -1}$. \\
If $\beta <0$, then $U\in \mathcal{M}(0)\cap \mathcal{M}_{0}^{-}(L,\rho )$ with $L(x)=-\beta (\log x)^{\beta -1}$.
\item If $U(x)=e^{\rho\, x^{-\alpha}}$, $\alpha >1$, then $U\in \mathcal{M}(0)\cap \mathcal{M}_{0}^{-}(L,\rho )$ with $L(x)=\alpha x^{-\alpha }$.

\item If $U(x)=e^{\rho\, x^{\alpha}}$, $\alpha >0$, then $U\in \mathcal{M}_{0}^{+}(L,\rho )$ with $L(x)=\alpha x^{\alpha }$.

\item Assume that a function $U$ has a derivative $U^{\prime }$ such that  
$\displaystyle \frac{x\,U^{\prime}(x)}{L(x)U(x)} \underset{x\to\infty}{\rightarrow} \alpha$, with $\alpha\in \R$. For each $\varepsilon >0$, we can find $x_\varepsilon$ such that, for $x\geq x_\varepsilon$,
\[
(\alpha -\varepsilon )\frac{L(x)}{x}\leq \frac{U^{\prime }(x)}{U(x)}\leq
(\alpha +\varepsilon )\frac{L(x)}{x}.
\]
Under Assumption~A,  $U\in \mathcal{M}_{0}(L,\alpha)$; under Assumption~B,  $U\in \mathcal{M}_{0}^{+}(L,\alpha )$; under Assumption~C, $U\in \mathcal{M}_{0}^{-}(L,\alpha)$.
%
\item Suppose that $a\in RV_{0}$ and that $U\in RV_{0}$ is a positive and
measurable function. We say that $U$ belongs to the class $\Pi (a)$ if $U$
satisfies, $\forall t>0$,
\begin{equation} \label{5}
\lim_{x\rightarrow \infty }\frac{U(tx)-U(x)}{a(x)}=\log t .
\end{equation}
The class $\Pi (a)$ was introduced and studied among others by de Haan (see \cite{deHaan}). 
If \eqref{5} holds, we have (see \cite{BinghamGT1989}, Chapter 3) 
\[
\log U(tx)-\log U(x)=\log \frac{U(tx)}{U(x)} \underset{x\to\infty}{\thicksim} \frac{U(tx)-U(x)}{U(x)}
\underset{x\to\infty}{\thicksim} \frac{a(x)\log t}{U(x)}\text{,} 
\]
so that $\log U\in \Pi (L)$ with $L(x):=a(x)/U(x)\in RV_{0}$. From the representation theorem for the class $\Pi $ (see Theorem 3.7.3(ii) in \cite{BinghamGT1989}), it follows that $\log U$ can be written as
\begin{equation}\label{6}
\log U(x)=C+A(x)+\int_{a}^{x}A(t)t^{-1}dt,\quad x\geq a , 
\end{equation}
where $A\in RV_{0}$ and $A(x)\underset{x\to\infty}{\thicksim} L(x)$. Now consider two cases.\\[-4ex]
\begin{itemize}
\item[(i)] If $\displaystyle \int_{a}^{x}A(t)t^{-1}dt \underset{x\to\infty}{\rightarrow} \infty $, then Karamata's theorem shows that \\$\displaystyle A(x)=o(1)\int_{a}^{x}A(t)t^{-1}dt$ and \eqref{6} implies that $\displaystyle U\in \mathcal{M}_{0}(L,1)$.
\item[(ii)]  If $\int_{a}^{\infty }A(t)t^{-1}dt<\infty $, then \eqref{6} implies that\\
$\displaystyle \log U(x)\underset{x\to\infty}{\rightarrow}  C+\int_{a}^{\infty }A(t)t^{-1}dt=:D$; so, for $x\ge a$,  we have\\
$\displaystyle D-\log U(x) \underset{x\to\infty}{\thicksim} \int_{x}^{\infty }A(t)t^{-1}dt $.
We can then deduce, since \\ $A(x)\underset{x\to\infty}{\thicksim} L(x)$, that $\displaystyle V:=e^{D}/U$ belongs to the class $\displaystyle \mathcal{M}_{0}^{-}(L,1)$.
\end{itemize}
\end{enumerate}

\subsection{The class $\mathcal{M}_{0}(L,\protect\rho )$}

\subsubsection{First properties}

\begin{proposition}
Under Assumption~A, if $U\in \mathcal{M}_{0}(L,\rho )$ then $U\in \mathcal{M}(0)$.
\end{proposition}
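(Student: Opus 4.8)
The plan is to show that $\log U(x)/\log x \to 0$, i.e.\ that $U \in \mathcal{M}(0)$, by comparing the defining normalizing function $\int_a^x L(t)t^{-1}\,dt$ against $\log x = \int_a^x t^{-1}\,dt$ (up to an additive constant). Since $U \in \mathcal{M}_0(L,\rho)$ means $\log U(x) \sim \rho \int_a^x L(t)t^{-1}\,dt$ as $x \to \infty$ (or $\log U(x) = o(\log x)$ trivially if $\rho = 0$), it suffices to prove that
\[
\frac{\int_a^x L(t)t^{-1}\,dt}{\log x} \xrightarrow[x\to\infty]{} 0 .
\]

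First I would fix $\varepsilon > 0$ and use Assumption~A, namely $L(t) \to 0$, to choose $x_\varepsilon > a$ (I may assume $a > 0$, or replace $a$ by a larger constant, since changing the lower limit only alters the integral by a bounded amount that is negligible against $\log x \to \infty$) such that $0 \le L(t) \le \varepsilon$ for all $t \ge x_\varepsilon$. Then I would split the integral:
\[
\int_a^x \frac{L(t)}{t}\,dt = \int_a^{x_\varepsilon} \frac{L(t)}{t}\,dt + \int_{x_\varepsilon}^x \frac{L(t)}{t}\,dt
\le C_\varepsilon + \varepsilon \int_{x_\varepsilon}^x \frac{dt}{t}
= C_\varepsilon + \varepsilon\bigl(\log x - \log x_\varepsilon\bigr),
\]
where $C_\varepsilon := \int_a^{x_\varepsilon} L(t)t^{-1}\,dt$ is a finite constant (finite because $L$ is measurable and, as $U$ is bounded on finite intervals and the class is well-defined, $L$ is integrable against $t^{-1}$ on compacta). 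Dividing by $\log x$ and letting $x \to \infty$ gives
\[
\limsup_{x\to\infty} \frac{\int_a^x L(t)t^{-1}\,dt}{\log x} \le \varepsilon,
\]
and since the integrand is nonnegative the $\liminf$ is $\ge 0$; as $\varepsilon$ was arbitrary, the ratio tends to $0$. Combining this with $\log U(x) \sim \rho \int_a^x L(t)t^{-1}\,dt$ yields $\log U(x)/\log x \to \rho \cdot 0 = 0$, and since $U$ is positive, measurable, supported on $\R^+$ and bounded on finite intervals, this is exactly the statement $U \in \mathcal{M}(0)$.

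I do not expect a serious obstacle here; the only point needing a little care is the treatment of the lower endpoint $a$ and the finiteness of $C_\varepsilon$ (handled by the boundedness/measurability hypotheses built into the definition, together with the fact that an additive bounded perturbation of the denominator-sized quantity $\log x$ is asymptotically irrelevant since $\log x \to \infty$). One should also remark that Assumption~A's second clause, $\int_a^\infty L(t)t^{-1}\,dt = \infty$, is what makes the normalization in \eqref{3} legitimately ``grow'', but it is not actually needed for this particular implication — the estimate above uses only $L(t) \to 0$. If desired, the argument can be phrased symmetrically to also bound $\log U(x)/\log x$ from below when $\rho < 0$, but since the $\limsup$ bound applies to $|\log U(x)|/\log x$ via $|\rho|\int_a^x L(t)t^{-1}\,dt$, the two-sided conclusion follows at once.
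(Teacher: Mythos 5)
Your proof is correct and follows essentially the same route as the paper: both factor $\frac{\log U(x)}{\log x}=\frac{\log U(x)}{\int_a^x L(t)t^{-1}\,dt}\cdot\frac{\int_a^x L(t)t^{-1}\,dt}{\log x}$ and reduce the claim to showing the second factor tends to $0$. The only difference is that the paper justifies that limit by L'Hopital's rule, whereas you prove it directly by splitting the integral at $x_\varepsilon$ and using $L(t)\le\varepsilon$ beyond that point; your version is if anything slightly more careful, since $L$ is only assumed measurable and the direct $\varepsilon$-argument avoids any differentiability issue in invoking L'Hopital.
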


\begin{proof}
We have, using  L'Hopital's rule,
$$
\lim_{x\rightarrow \infty }\frac{\log U(x)}{\log x} =\lim_{x\rightarrow
\infty }\frac{\log U(x)}{\int_{a}^{x}L(t)t^{-1}dt}\times \frac{\int_{a}^{x}L(t)t^{-1}dt}{\log x} 
=\rho \times \lim_{x\rightarrow \infty }\frac{\int_{a}^{x}L(t)t^{-1}dt}{\log x}=0.
$$
\end{proof}

The following result shows that the class $\mathcal{M}_{0}(L,\rho )$ does {\it not}
contain regularly varying functions with index different from $0$.

\begin{proposition}
Suppose that Assumption~A holds and that $U\in RV_{\alpha }$ with $\alpha
\neq 0$. Then $U\notin \mathcal{M}_{0}(L,\rho )$ for any $\rho$.
\end{proposition}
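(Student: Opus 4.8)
The plan is to argue by contradiction. Suppose $U\in RV_{\alpha}$ with $\alpha\neq 0$ and also $U\in\mathcal{M}_0(L,\rho)$ for some $\rho$. Since $U\in RV_{\alpha}$, the Karamata representation (or simply \eqref{1} via the fact $RV_\alpha\subset\mathcal{M}(\alpha)$ recorded in the introduction) gives $\log U(x)/\log x\to\alpha\neq 0$; in particular $|\log U(x)|\to\infty$ at the rate of $|\alpha|\log x$. On the other hand, the hypothesis $U\in\mathcal{M}_0(L,\rho)$ says $\log U(x)\sim\rho\int_a^x L(t)t^{-1}dt$. Combining the two,
\[
\alpha=\lim_{x\to\infty}\frac{\log U(x)}{\log x}=\rho\cdot\lim_{x\to\infty}\frac{\int_a^x L(t)t^{-1}dt}{\log x}.
\]
So it suffices to show that $\int_a^x L(t)t^{-1}dt=o(\log x)$ under Assumption~A, which forces the right-hand side to be $0$, contradicting $\alpha\neq 0$.

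The core step is therefore the limit $\displaystyle\lim_{x\to\infty}\frac{\int_a^x L(t)t^{-1}dt}{\log x}=0$. This is exactly the limit already used inside the proof of the preceding proposition, so I would either cite that computation or redo it in one line: by L'Hopital's rule the ratio has the same limit as $L(x)$ (the derivatives being $L(x)/x$ and $1/x$), and Assumption~A gives $L(x)\to 0$. One should note that L'Hopital applies because both numerator and denominator tend to $+\infty$ — the denominator obviously, and the numerator by the second half of Assumption~A, $\int_a^\infty L(t)t^{-1}dt=\infty$. A measurability/local-boundedness remark on $L$ may be worth a word so that the integrals and the derivative manipulation are legitimate, but $L$ is assumed positive and measurable throughout and the classes are defined with functions bounded on finite intervals, so this is routine.

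The only genuine subtlety to flag is the logical structure of the contradiction: the definition of $\mathcal{M}_0(L,\rho)$ requires the limit in \eqref{3} to \emph{equal} a finite $\rho$, so "$U\notin\mathcal{M}_0(L,\rho)$ for any $\rho$" means the ratio $\log U(x)/\int_a^x L(t)t^{-1}dt$ either fails to converge or converges to something — but here we must rule out every finite value. The argument above does this cleanly: if that ratio converged to any finite $\rho$, then the displayed chain would give $\alpha=0$, a contradiction; hence no finite $\rho$ works. I do not anticipate a serious obstacle — the proof is essentially a one-paragraph reduction to the auxiliary limit established just above in the paper — the main thing to get right is stating the contradiction in a way that covers all $\rho$ at once rather than fixing $\rho$ first.
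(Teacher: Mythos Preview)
Your argument is correct, and it is in fact the cleanest way to phrase the result: you are simply observing that the present proposition is an immediate corollary of the preceding one (namely $\mathcal{M}_0(L,\rho)\subset\mathcal{M}(0)$) together with $RV_\alpha\subset\mathcal{M}(\alpha)$, so $\alpha\neq 0$ forces $U\notin\mathcal{M}_0(L,\rho)$ for every $\rho$. One cosmetic point: the line ``$\log U(x)\sim\rho\int_a^x L(t)t^{-1}dt$'' presupposes $\rho\neq 0$; you never actually use it, since your displayed identity relies only on the product of the two limits, so just drop that phrase.

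The paper takes a slightly different route. Instead of invoking the previous proposition, it uses the Karamata representation $U(x)=c(x)\exp\{\int_a^x\beta(t)t^{-1}dt\}$ with $\beta(x)\to\alpha$, and computes the ratio $\log U(x)\big/\int_a^x L(t)t^{-1}dt$ \emph{directly}, applying L'H\^opital to the quotient $\int_a^x\beta(t)t^{-1}dt\big/\int_a^x L(t)t^{-1}dt$ to obtain $\lim\beta(x)/L(x)$, which is infinite since $\alpha\neq 0$ and $L(x)\to 0$. Your approach is more economical and avoids the representation theorem; the paper's approach yields a little more information, namely that the ratio in \eqref{3} actually diverges to $\operatorname{sgn}(\alpha)\cdot\infty$ rather than merely failing to converge to a finite value.
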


\begin{proof}
The representation theorem for regularly varying functions (see \cite{BinghamGT1989}, Theorem 1.3.1) states that $U$ can be written as $\displaystyle U(x)=c(x)\exp\left\{ \int_{a}^{x}\beta (t)t^{-1}dt\right\}$,  
where $c(x)\underset{x\to\infty}{\rightarrow} c\neq 0$ and $\beta (x)\underset{x\to\infty}{\rightarrow}  \alpha $. Taking logarithms in this expression provides, under Assumption~A,  
$$
\lim_{x\to\infty}  \frac{\log U(x)}{\int_{a}^{x}L(t)t^{-1}dt}=\lim_{x\to\infty}  \frac{\int_{a}^{x}\beta (t)t^{-1}dt}{\int_{a}^{x}L(t)t^{-1}dt}=\lim_{x\to\infty}  \frac{\beta (x)}{L(x)}.
$$
This limit is not finite since $\alpha \neq 0$ and $L(x)\underset{x\to\infty}{\rightarrow}  0$.
Hence the result.
\end{proof}

In our next result, we collect some algebraic results.
\begin{lemma}\label{lem:proptiesA}
Suppose $U\in \mathcal{M}_{0}(L_{1},\alpha
) $, with $L_1$ satisfying Assumption A.
\begin{itemize}
\item[(i)] If $V\in \mathcal{M}_{0}(L_{1},\beta )$, then $UV\in \mathcal{M}_{0}(L_{1},\alpha +\beta )$ and $U/V\in \mathcal{M}_{0}(L_{1},\alpha -\beta) $.
\item[(ii)] If $V\in \mathcal{M}_{0}(L_{2},\beta )$ and $L_{2}(x)/L_{1}(x)\underset{x\to\infty}{\rightarrow}  0$, then $UV\in \mathcal{M}_{0}(L_{1},\alpha )$.
\item[(iii)] If \, $xV^{\prime }(x)/V(x)\underset{x\to\infty}{\rightarrow}  \beta >0$, then \, $U\circ V\in 
\mathcal{M}_{0}(L\circ V,\alpha \beta )$.
\end{itemize}
\end{lemma}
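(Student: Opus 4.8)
The plan is to reduce everything to the defining relation \eqref{3} and the Assumption~A bookkeeping on the normalizing integrals. Write $I_1(x)=\int_a^x L_1(t)t^{-1}dt$, so by hypothesis $\log U(x)\sim\alpha I_1(x)$ and $I_1(x)\to\infty$.

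For part~(i), since $V\in\mathcal M_0(L_1,\beta)$ we also have $\log V(x)\sim\beta I_1(x)$ with the \emph{same} normalizer. Then $\log(UV)(x)=\log U(x)+\log V(x)$ and dividing by $I_1(x)$ gives the limit $\alpha+\beta$; dividing $\log U(x)-\log V(x)$ by $I_1(x)$ gives $\alpha-\beta$. The only point to check is that $UV$ and $U/V$ are again positive, measurable, and bounded on finite intervals, which is immediate from the corresponding properties of $U$ and $V$. This part is essentially automatic.

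For part~(ii), let $I_2(x)=\int_a^x L_2(t)t^{-1}dt$ (taking a common lower limit, adjusting by a constant if needed). From $L_2/L_1\to 0$ and $I_1(x)\to\infty$, an easy L'Hopital (or Karamata-type) argument gives $I_2(x)/I_1(x)\to 0$: indeed $I_2'(x)/I_1'(x)=L_2(x)/L_1(x)\to 0$, and since $I_1\to\infty$ the ratio of the integrals tends to $0$ as well. (A small nuisance: if $I_2(x)$ stays bounded this is trivial; if $I_2(x)\to\infty$, apply L'Hopital directly.) Then $\log(UV)(x)/I_1(x)=\log U(x)/I_1(x)+\bigl(\log V(x)/I_2(x)\bigr)\cdot\bigl(I_2(x)/I_1(x)\bigr)\to\alpha+\beta\cdot 0=\alpha$, provided $\log V(x)/I_2(x)$ is bounded, which holds since it converges to $\beta$. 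One should also note $L_1$ still satisfies Assumption~A, so $UV\in\mathcal M_0(L_1,\alpha)$ is a legitimate membership.

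Part~(iii) is the one requiring care, and I expect it to be the main obstacle. The hypothesis $xV'(x)/V(x)\to\beta>0$ forces $V\in RV_\beta$ by the representation theorem, so $V(x)\to\infty$ and in particular $V(x)\ge a$ for large $x$, making $U\circ V$ well-defined eventually; bounded on finite intervals follows since $V$ maps bounded sets to bounded sets. The substitution $u=V(t)$ in $\int^{V(x)} L(u)u^{-1}du$ and the relation $du/u=(V'(t)/V(t))\,dt\sim\beta\,dt/t$ suggest that $\int_a^{V(x)}L(u)u^{-1}du$ should be compared with $\beta\int^x (L\circ V)(t)t^{-1}dt$; making this rigorous requires showing that $\int^x (L\circ V)(t)t^{-1}dt\to\infty$ (so that $L\circ V$ satisfies Assumption~A and the statement even makes sense) and then writing
\[
\frac{\log U(V(x))}{\int_a^{x}(L\circ V)(t)t^{-1}dt}
=\frac{\log U(V(x))}{\int_{a}^{V(x)}L(u)u^{-1}du}\cdot
\frac{\int_{a}^{V(x)}L(u)u^{-1}du}{\int_a^{x}(L\circ V)(t)t^{-1}dt},
\]
where the first factor $\to\alpha$ by $U\in\mathcal M_0(L,\alpha)$ (using $V(x)\to\infty$) and the second factor $\to\beta$. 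The second limit is the delicate step: I would establish it by differentiating numerator and denominator — the numerator's derivative (after the change of variables, or via Leibniz) is $L(V(x))V'(x)/V(x)$ and the denominator's is $L(V(x))/x$, whose ratio is $xV'(x)/V(x)\to\beta$ — and then invoking L'Hopital, which applies once we know the denominator tends to $\infty$. Checking that $\int^x(L\circ V)(t)t^{-1}dt\to\infty$ is itself a subpoint: since $V\in RV_\beta$ with $\beta>0$ one can bound $V(t)$ below by a power of $t$ and use that $L$ has a divergent logarithmic integral together with a monotonicity or uniform-convergence argument; this is where most of the genuine work lies.
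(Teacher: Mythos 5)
Your proposal follows essentially the same route as the paper: parts (i) and (ii) directly from the definition, and part (iii) by combining $V(x)\to\infty$, the change of variables $t=V(z)$, and l'Hopital's rule applied to the ratio of the two normalizing integrals. The only point where you diverge is the verification that $\int^x (L\circ V)(t)t^{-1}dt\to\infty$ (which the paper glosses over): rather than lower-bounding $V$ by a power of $t$, this follows at once from your own substitution identity, since $\int_a^{V(x)}L(u)u^{-1}du\to\infty$ while $zV'(z)/V(z)$ is eventually bounded above by $2\beta$.
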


\begin{proof}
Properties (i) and (ii)  follow directly from the definition \eqref{3}.\\
Let us check (iii). The condition $xV^{\prime }(x)/V(x)\rightarrow \beta >0$ ensures that, for large values of $x$, $V^{\prime }(x)>0$, {\it i.e.} $V(x)$ is increasing, implying $V(x) \underset{x\to\infty}{\rightarrow}\infty $. Hence, from \eqref{3}, it follows that
$\displaystyle \frac{\log U(V(x))}{\int_{a}^{V(x)}L(t)t^{-1}dt} \, \underset{x\to\infty}{\rightarrow} \alpha$. \\
But the change of variables $t=V(z)$ gives
$\displaystyle \int_{a}^{V(x)}L(t)t^{-1}dt=\int_{b}^{x}L(V(z))\frac{zV^{\prime }(z)}{V(z)} z^{-1}dz$.
Then, using l'Hopital's rule, we obtain that 
$\displaystyle \frac{\int_{a}^{V(x)}L(t)t^{-1}dt} {\int_{b}^{x}L(V(z))z^{-1}dz}\underset{x\to\infty}{\rightarrow} \beta$. \\Combining the two limits provides the result.
\end{proof}


\subsubsection{Characterization theorem}

We can obtain a characterization theorem for functions belonging to $\mathcal{M}_{0}(L,\rho )$.
We denote in what follows, for $a\in \R$ and $x>a$,
\begin{equation}\label{V}
\displaystyle V(x):=\exp \left\{\int_{a}^{x}L(t)t^{-1}dt\right\}.
\end{equation} 

%
\begin{theorem}\label{th:charact} 
Under Assumption~A, we have:
$$
U\in \mathcal{M}_{0}(L,\rho ) \quad \Longleftrightarrow\quad
\forall \epsilon >0,  \; \frac{U(x)}{V^{\rho+\epsilon }(x)}\underset{x\to\infty}{\rightarrow} 0 \quad\text{and}\quad \frac{U(x)}{V^{\rho -\epsilon }(x)}\underset{x\to\infty}{\rightarrow}\infty.
$$
\end{theorem}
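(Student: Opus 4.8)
The plan is to unwind both implications directly from the definition \eqref{3}, using the fact that $V(x)=\exp\{\int_a^x L(t)t^{-1}dt\}\to\infty$ under Assumption~A (since the integral diverges), so that $\log V(x)=\int_a^x L(t)t^{-1}dt\to\infty$ is an admissible denominator. Observe first that $U(x)/V^{\rho\pm\epsilon}(x)$ has logarithm $\log U(x)-(\rho\pm\epsilon)\log V(x)$, so the two displayed limit conditions are equivalent to the pair of statements $\log U(x)-(\rho+\epsilon)\log V(x)\to-\infty$ and $\log U(x)-(\rho-\epsilon)\log V(x)\to+\infty$ for every $\epsilon>0$.

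For the forward direction, I would assume $\log U(x)/\log V(x)\to\rho$. Then for fixed $\epsilon>0$ write
\[
\log U(x)-(\rho+\epsilon)\log V(x)=\log V(x)\left(\frac{\log U(x)}{\log V(x)}-\rho-\epsilon\right).
\]
The first factor tends to $+\infty$ and the parenthesis tends to $-\epsilon<0$, so the product tends to $-\infty$; this gives $U(x)/V^{\rho+\epsilon}(x)\to0$. The analogous computation with $\rho-\epsilon$ gives a product of something tending to $+\infty$ with something tending to $+\epsilon>0$, hence $\log U(x)-(\rho-\epsilon)\log V(x)\to+\infty$, i.e. $U(x)/V^{\rho-\epsilon}(x)\to\infty$.

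For the converse, I would assume the two limit conditions hold for every $\epsilon>0$ and aim to squeeze $\log U(x)/\log V(x)$ between $\rho-\epsilon$ and $\rho+\epsilon$ for large $x$. Fix $\epsilon>0$. From $U(x)/V^{\rho+\epsilon}(x)\to0$ we get, for $x$ large enough, $\log U(x)-(\rho+\epsilon)\log V(x)\le 0$, hence (dividing by $\log V(x)>0$) $\log U(x)/\log V(x)\le\rho+\epsilon$; similarly $U(x)/V^{\rho-\epsilon}(x)\to\infty$ gives $\log U(x)/\log V(x)\ge\rho-\epsilon$ eventually. Since $\epsilon>0$ is arbitrary, $\log U(x)/\log V(x)\to\rho$, which is exactly $U\in\mathcal{M}_0(L,\rho)$ by \eqref{3} and the definition \eqref{V} of $V$.

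The only mild subtlety — and the step I would be most careful about — is making sure the denominator $\log V(x)=\int_a^x L(t)t^{-1}dt$ is eventually positive and tends to $+\infty$, so that dividing the inequalities by it is legitimate and preserves their direction; this is precisely guaranteed by Assumption~A. Everything else is routine manipulation of limits, so no genuine obstacle arises; one just has to present the two directions cleanly and note that the quantified-$\epsilon$ formulation of the conclusion is literally the $\epsilon$-$\delta$ translation of the limit in \eqref{3}.
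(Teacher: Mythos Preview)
Your proof is correct and follows essentially the same route as the paper. The paper abstracts the core argument into a separate lemma (stated for a general divergent normalizer $g$, with $g(x)=\log V(x)=\int_a^x L(t)t^{-1}dt$ in the application), while you carry out the identical computation inline; the forward direction via the factorization $\log V(x)\bigl(\log U(x)/\log V(x)-(\rho\pm\epsilon)\bigr)$ and the converse via dividing the eventual inequalities by $\log V(x)>0$ match the paper's argument step for step.
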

It is straightforward to deduce the following.
\begin{corollary}\label{cor:7}
Under the conditions of Theorem \ref{th:charact}, if $U\in \mathcal{M}_{0}(L,\rho )$, then for
each $\epsilon >0$, there exists $x_\epsilon>0$ such that, for all $x\ge x_\epsilon$, 
\begin{equation}\label{7}
V^{\rho -\epsilon }(x)\leq U(x)\leq V^{\rho +\epsilon }(x).
\end{equation}
\end{corollary}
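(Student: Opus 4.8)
The plan is to obtain the two inequalities in \eqref{7} directly from Theorem~\ref{th:charact}, which is precisely the stronger, limit-form statement. First I would fix $\epsilon>0$ and invoke the theorem for this particular $\epsilon$: since $U\in\mathcal{M}_{0}(L,\rho)$, we have both $U(x)/V^{\rho+\epsilon}(x)\to 0$ and $U(x)/V^{\rho-\epsilon}(x)\to\infty$ as $x\to\infty$.

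Next I would turn each of these limits into an eventual inequality by comparing the ratio against the threshold $1$. From $U(x)/V^{\rho+\epsilon}(x)\to 0$ there is $x_{1}$ with $U(x)/V^{\rho+\epsilon}(x)\le 1$ for all $x\ge x_{1}$, hence (multiplying by the positive quantity $V^{\rho+\epsilon}(x)$) $U(x)\le V^{\rho+\epsilon}(x)$ on $[x_{1},\infty)$. Likewise, from $U(x)/V^{\rho-\epsilon}(x)\to\infty$ there is $x_{2}$ with $U(x)/V^{\rho-\epsilon}(x)\ge 1$, i.e.\ $U(x)\ge V^{\rho-\epsilon}(x)$, for all $x\ge x_{2}$. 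Taking $x_{\epsilon}:=\max(x_{1},x_{2})$ then yields $V^{\rho-\epsilon}(x)\le U(x)\le V^{\rho+\epsilon}(x)$ for every $x\ge x_{\epsilon}$, which is \eqref{7}.

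There is essentially no obstacle here: all the work is contained in Theorem~\ref{th:charact}, and the corollary is just its ``eventually sandwiched'' reformulation. The only point worth a word of care is that $V$ is strictly positive on $(a,\infty)$ — immediate from its definition \eqref{V} as an exponential of an integral — so that multiplying through by powers of $V$ leaves the inequalities unchanged in direction; apart from that, the argument is a one-line unfolding of the definition of the limits. (Alternatively, one could re-derive the bounds straight from \eqref{3} by writing $\log U(x)=(\rho+o(1))\int_{a}^{x}L(t)t^{-1}dt$ and exponentiating, but this merely reproduces the easy half of Theorem~\ref{th:charact} and is less economical.)
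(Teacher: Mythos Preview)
Your argument is correct and is exactly the straightforward deduction the paper intends: the paper does not spell out a proof at all, merely noting that the corollary follows directly from Theorem~\ref{th:charact}, which is precisely what you do by converting each of the two limits into an eventual inequality at the threshold~$1$ and taking $x_\epsilon=\max(x_1,x_2)$.
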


{\it Proof of Theorem \ref{th:charact}.}
The proof of the theorem is based on the following lemma, inspired by Theorem 1.1 in \cite{CadenaT-2016} or \cite{CKO2017}.
\begin{lemma}\label{lem:meitner}
Let $U$ and $g$ denote measurable and positive functions with support $\R^{+}$ and assume that $g(x)\underset{x\to\infty}{\rightarrow} \infty $. Let $\rho \in \R $. The following statements are equivalent: 
\begin{itemize}
\item[(i)] $\log U(x)/g(x)\underset{x\to\infty}{\rightarrow}  \rho $;
\item[(ii)] $\forall \epsilon >0$, \,$U(x)\,e^{-(\rho +\epsilon )g(x)} \underset{x\to\infty}{\rightarrow}  0$
\, and \, $U(x)\,e^{ -(\rho -\epsilon )g(x)}\underset{x\to\infty}{\rightarrow}  \infty $.
\end{itemize}
\end{lemma}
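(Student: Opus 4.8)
The plan is to prove Lemma~\ref{lem:meitner} directly from the definitions, treating the two implications separately and reducing everything to elementary manipulations of the $\varepsilon$--$\delta$ statement for the limit in (i). The key observation is that both (i) and (ii) are really statements about the quantity $\log U(x) - \rho\, g(x)$, once we take logarithms in (ii): the condition $U(x)e^{-(\rho+\varepsilon)g(x)}\to 0$ is equivalent (since $U>0$ and $g(x)\to\infty$, so all quantities are eventually defined and the exponential is monotone) to $\log U(x) - (\rho+\varepsilon)g(x)\to -\infty$, and similarly $U(x)e^{-(\rho-\varepsilon)g(x)}\to\infty$ is equivalent to $\log U(x)-(\rho-\varepsilon)g(x)\to+\infty$. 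So the whole lemma can be restated as: $\log U(x)/g(x)\to\rho$ if and only if for every $\varepsilon>0$, $\log U(x)-(\rho+\varepsilon)g(x)\to-\infty$ and $\log U(x)-(\rho-\varepsilon)g(x)\to+\infty$.

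For the implication (i)$\Rightarrow$(ii), I would fix $\varepsilon>0$. By (i), there is $x_0$ such that for $x\geq x_0$ we have $\rho - \varepsilon/2 \leq \log U(x)/g(x) \leq \rho + \varepsilon/2$, i.e. $(\rho-\varepsilon/2)g(x)\leq \log U(x)\leq(\rho+\varepsilon/2)g(x)$. Then $\log U(x) - (\rho+\varepsilon)g(x) \leq -(\varepsilon/2)g(x) \to -\infty$ because $g(x)\to\infty$, and likewise $\log U(x) - (\rho-\varepsilon)g(x) \geq (\varepsilon/2)g(x)\to+\infty$. Exponentiating gives the two limits in (ii). This direction is routine and uses $g(x)\to\infty$ in an essential way (it is what upgrades a bounded deviation into an infinite one after the shift by $\varepsilon$).

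For the converse (ii)$\Rightarrow$(i), I would argue by contradiction or, more cleanly, directly: fix $\varepsilon>0$ and use the hypothesis with this $\varepsilon$. From $\log U(x)-(\rho+\varepsilon)g(x)\to-\infty$ we get, for $x$ large enough, $\log U(x) - (\rho+\varepsilon)g(x) \leq 0$, hence $\log U(x)/g(x)\leq \rho+\varepsilon$ (dividing by $g(x)>0$, legitimate since $g(x)\to\infty$). Symmetrically, from $\log U(x)-(\rho-\varepsilon)g(x)\to+\infty$ we get $\log U(x)/g(x)\geq\rho-\varepsilon$ for $x$ large. Since $\varepsilon>0$ is arbitrary, $\log U(x)/g(x)\to\rho$. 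The only mild subtlety worth spelling out is that $\log U(x)$ is well-defined and finite for each $x$ (guaranteed by $U$ positive and measurable, and the statement is asymptotic so we only need it for large $x$), and that passing from the exponential formulation to the logarithmic one is valid — this is where I would be slightly careful, noting that $\log$ is continuous and strictly increasing and that $a_x\to 0$ with $a_x>0$ is equivalent to $\log a_x\to-\infty$, and $a_x\to\infty$ to $\log a_x\to+\infty$.

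I do not expect a genuine obstacle here; the lemma is essentially a reformulation exercise. The one place to be attentive is making the equivalence ``$U(x)e^{-(\rho\pm\varepsilon)g(x)}\to 0$ or $\infty$'' $\Longleftrightarrow$ ``$\log U(x)-(\rho\pm\varepsilon)g(x)\to\mp\infty$'' airtight, and ensuring the role of the hypothesis $g(x)\to\infty$ is visible — it is used in (i)$\Rightarrow$(ii) to turn $\pm(\varepsilon/2)g(x)$ into $\pm\infty$, and in (ii)$\Rightarrow$(i) to justify dividing the limit relations by $g(x)$. Once the lemma is in hand, Theorem~\ref{th:charact} follows by applying it with $g(x)=\int_a^x L(t)t^{-1}dt$, which tends to $\infty$ precisely by Assumption~A, and noting that $e^{(\rho\pm\varepsilon)g(x)} = V^{\rho\pm\varepsilon}(x)$ by the definition \eqref{V} of $V$.
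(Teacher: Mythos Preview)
Your proof is correct and follows essentially the same route as the paper's own argument: for (i)$\Rightarrow$(ii) you sandwich $\log U(x)$ between $(\rho\pm\varepsilon/2)g(x)$ (the paper uses a generic $\delta\in(0,\varepsilon)$ instead of $\varepsilon/2$) and exploit $g(x)\to\infty$, and for (ii)$\Rightarrow$(i) you extract eventual inequalities $\rho-\varepsilon\leq \log U(x)/g(x)\leq \rho+\varepsilon$ from the two limits, exactly as the paper does via $\limsup$/$\liminf$. The only cosmetic difference is that you pass explicitly through the logarithmic reformulation $\log U(x)-(\rho\pm\varepsilon)g(x)\to\mp\infty$, which the paper leaves implicit.
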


{\it Proof of Lemma \ref{lem:meitner}.}
First suppose that (i) holds and let $\epsilon>0$. For each $\delta \in (0,\epsilon )$, there exists $x_\delta$ such that for all $x\geq x_\delta$,
\[
(\rho -\delta )g(x)\leq \log U(x)\leq (\rho +\delta )g(x).
\]
It follows that $\displaystyle
\frac{U(x)}{\exp\{ (\rho +\epsilon )g(x)\}}\leq \exp\{(\delta -\epsilon)g(x)\}\, \underset{x\to\infty}{\rightarrow} \, 0
$
and, similarly,
$\displaystyle
\frac{U(x)}{\exp\{(\rho -\epsilon )g(x)\}}\geq \exp\{(\epsilon -\delta)g(x)\}\,\underset{x\to\infty}{\rightarrow} \, \infty$,
hence the result (ii).

Now assume that (ii) holds. For each $\delta >0$, there exists $x_\delta$ such that for all $x\ge x_\delta$, $\displaystyle U(x)\,e^{- (\rho +\epsilon )g(x)}\leq \delta$.
It follows that $\log U(x)\leq \log \delta +(\rho +\epsilon )g(x)$ and then
also that $\displaystyle \limsup_{x\to\infty} \log U(x)/g(x)\leq \rho +\epsilon $. In a similar way, we
obtain that $\displaystyle \liminf_{x\to\infty}  \log U(x)/g(x)\geq \rho -\epsilon $. Now (i) follows. $\Box$

Note that Assumption~A implies that $V\in RV_{0}$, $\displaystyle \lim_{x\rightarrow \infty }V(x)=\infty $ and
that $xV^{\prime }(x)/V(x)=L(x)\underset{x\to\infty}{\rightarrow} 0$. Applying Lemma \ref{lem:meitner} with $g:=V$ concludes the proof of the theorem. 
$\Box$

\subsubsection{Representation theorem}

We can also provide a representation theorem for the class $\mathcal{M}_{0}(L,\rho )$, as follows.
\begin{theorem}
Suppose Assumption A holds. Then $\displaystyle U\in \mathcal{M}_{0}(L,\rho )$ if and only if $U$ is of the form
\begin{equation}\label{8}
\log U(x)=\alpha (x)+\int_{a}^{x}\beta (t)L(t)t^{-1}dt
\end{equation}
where $\displaystyle \alpha (x)=o(1)\int_{a}^{x}L(t)t^{-1}dt$ and $\beta (x)\rightarrow\rho$, as $x\to\infty$.
\end{theorem}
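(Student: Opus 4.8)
The plan is to write $g(x):=\int_{a}^{x}L(t)t^{-1}dt=\log V(x)$ with $V$ as in \eqref{V}, and to recall that Assumption~A guarantees $g(x)\to\infty$ as $x\to\infty$. I would prove the two implications separately; the argument is essentially an application of a Cesàro (l'Hôpital) lemma for integrals, combined, for the necessity part, with a direct construction.

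\emph{Sufficiency.} Assume $\log U(x)=\alpha(x)+\int_{a}^{x}\beta(t)L(t)t^{-1}dt$ with $\alpha(x)=o(1)\,g(x)$ and $\beta(x)\to\rho$. Dividing by $g(x)$, the term $\alpha(x)/g(x)$ vanishes by hypothesis, so it remains to show $g(x)^{-1}\int_{a}^{x}\beta(t)L(t)t^{-1}dt\to\rho$. Here I would fix $\varepsilon>0$, choose $x_{\varepsilon}$ with $|\beta(t)-\rho|<\varepsilon$ for $t\ge x_{\varepsilon}$, split the integral at $x_{\varepsilon}$, bound the tail part by $\varepsilon\bigl(g(x)-g(x_{\varepsilon})\bigr)\le\varepsilon\,g(x)$ while the fixed part stays $O(1)$; since $g(x)\to\infty$ this gives $\limsup_{x\to\infty}\bigl|g(x)^{-1}\int_{a}^{x}\beta(t)L(t)t^{-1}dt-\rho\bigr|\le\varepsilon$, and $\varepsilon$ being arbitrary yields the claim. (Equivalently one invokes l'Hôpital, as done elsewhere in the paper, the denominator $g$ being increasing to $\infty$ with ratio of derivatives $\beta(x)\to\rho$.) Hence $\log U(x)/g(x)\to\rho$, i.e. $U\in\mathcal{M}_{0}(L,\rho)$.

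\emph{Necessity.} Conversely, if $U\in\mathcal{M}_{0}(L,\rho)$, I would simply take $\beta\equiv\rho$ and set $\alpha(x):=\log U(x)-\rho\int_{a}^{x}L(t)t^{-1}dt$. Then \eqref{8} holds by construction, $\beta(x)\to\rho$ trivially, and $\alpha(x)/g(x)=\log U(x)/g(x)-\rho\to0$, that is $\alpha(x)=o(1)\,g(x)$, as required.

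I do not expect a serious obstacle here: the forward implication is immediate from this choice, and the only point needing care is the Cesàro limit in the sufficiency part, where the divergence $\int_{a}^{\infty}L(t)t^{-1}dt=\infty$ supplied by Assumption~A is precisely what makes the ratio ``error terms over $g(x)$'' negligible; without it, \eqref{8} would instead force $\log U$ to tend to a finite limit. If a more structural route is preferred, the same conclusions follow by combining the characterization of Theorem~\ref{th:charact} with the change of variables $t=V(z)$ already used in the proof of Lemma~\ref{lem:proptiesA}(iii), but the elementary argument above is the most direct.
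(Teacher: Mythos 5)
Your proof is correct, and it differs from the paper's in a small but worthwhile way. For the necessity direction the paper sets $\beta(x)=\rho+\epsilon(x)$ with $\epsilon(x)=\log U(x)/g(x)-\rho$ and then takes $\alpha(x)=\epsilon(x)\int_a^x L(t)t^{-1}dt-\int_a^x\epsilon(t)L(t)t^{-1}dt$, so that verifying $\alpha(x)=o(1)\int_a^x L(t)t^{-1}dt$ already requires the Ces\`aro estimate $\int_a^x\epsilon(t)L(t)t^{-1}dt=o(1)g(x)$; your choice $\beta\equiv\rho$, $\alpha(x)=\log U(x)-\rho g(x)$ makes that direction a one-line tautology and is genuinely simpler. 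Conversely, the paper dismisses the sufficiency direction as ``obvious,'' whereas that is precisely where the Ces\`aro/l'H\^opital argument is actually needed (to pass from $\beta(t)\to\rho$ to $g(x)^{-1}\int_a^x\beta(t)L(t)t^{-1}dt\to\rho$, using $g(x)\to\infty$ from Assumption~A); you supply that step explicitly and correctly. In short, the two proofs rest on the same single analytic fact, but you place it where it belongs, and your closing remark --- that without $\int_a^\infty L(t)t^{-1}dt=\infty$ the representation would force $\log U$ to converge --- matches item 2 of the Remark following the Definition. No gaps.
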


\begin{proof}
From definition \eqref{3}, it follows that
\[
\log U(x)=(\rho +\epsilon (x))\int_{a}^{x}L(t)t^{-1}dt\,,\quad\text{ with} \; \epsilon (x)\underset{x\to\infty}{\rightarrow} 0.
\]
Therefore we can write
\[
\log U(x)=\alpha (x)+\int_{a}^{x}\beta (t)L(t)t^{-1}dt 
\]
where $\beta (x)=\rho +\epsilon (x)$\, and \;
$\displaystyle  \alpha (x)=\epsilon (x)\int_{a}^{x}L(t)t^{-1}dt-\int_{a}^{x}\epsilon
(t)L(t)t^{-1}dt$.
Clearly $\displaystyle \alpha (x)=o(1)\int_{a}^{x}L(t)t^{-1}dt$ and $\beta(x) \rightarrow \rho$, as $x\to\infty$, hence \eqref{8}. \\
The converse result is obvious.
\end{proof}
 
\subsubsection{Integrals}

\begin{proposition} (Karamata's theorem). Let $U\in \mathcal{M}_{0}(L,\rho )$. Under Assumption A, we have:
\begin{itemize}
\item[(i)] If $\alpha >-1$, then \; $\displaystyle x^{-1-\alpha}\!\!\int_{a}^{x}t^{\alpha }U(t)\, dt \; \in 
\mathcal{M}_{0}(L,\rho )$.
\item[(ii)]  If $\alpha <-1$, then \; $\displaystyle x^{-1-\alpha}\!\!\int_{x}^{\infty }t^{\alpha
}U(t)\, dt \; \in \mathcal{M}_{0}(L,\rho )$.
\end{itemize}
\end{proposition}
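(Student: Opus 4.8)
The plan is to mimic the classical proof of Karamata's theorem for regularly varying functions, replacing the exponent $\rho$ by the normalizing integral $\int_a^x L(t)t^{-1}\,dt$ throughout. Write $W(x):=\int_a^x t^{\alpha}U(t)\,dt$ for part (i). The key device is the two-sided bound supplied by Corollary~\ref{cor:7}: for each $\epsilon>0$ there is $x_\epsilon$ with $V^{\rho-\epsilon}(x)\le U(x)\le V^{\rho+\epsilon}(x)$ for $x\ge x_\epsilon$, where $V$ is as in \eqref{V}. So I would first establish that $V\in\mathcal{M}_0(L,1)$ itself, and that $V\in RV_0$ (already noted in the proof of Theorem~\ref{th:charact}), so that the ordinary Karamata theorem applies to $t^{\alpha}V^{c}(t)$ for any constant $c$ when $\alpha>-1$: indeed $t^{\alpha}V^{c}(t)\in RV_{\alpha}$ and hence $x^{-1-\alpha}\int_a^x t^{\alpha}V^{c}(t)\,dt\sim (\alpha+1)^{-1}V^{c}(x)$.

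Next I would sandwich $W$. Splitting the integral at $x_\epsilon$, the lower part is $O(1)$ and negligible after dividing by $x^{1+\alpha}$ (since $x^{1+\alpha}\to\infty$ and, by Proposition~2.4, $\log V(x)/\log x\to 0$ so $V^{\rho+\epsilon}(x)$ grows subpolynomially and dominates any constant). For the tail part,
\[
\int_{x_\epsilon}^{x} t^{\alpha}V^{\rho-\epsilon}(t)\,dt\;\le\;\int_{x_\epsilon}^{x} t^{\alpha}U(t)\,dt\;\le\;\int_{x_\epsilon}^{x} t^{\alpha}V^{\rho+\epsilon}(t)\,dt .
\]
Dividing by $x^{1+\alpha}$ and using the Karamata asymptotics for the two flanking integrals gives, for $x$ large,
\[
\frac{c_\epsilon}{\alpha+1}\,V^{\rho-\epsilon}(x)\;\lesssim\;x^{-1-\alpha}W(x)\;\lesssim\;\frac{c_\epsilon'}{\alpha+1}\,V^{\rho+\epsilon}(x),
\]
with $c_\epsilon,c_\epsilon'\to 1$. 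Taking logarithms, dividing by $\int_a^x L(t)t^{-1}dt=\log V(x)$, and letting $x\to\infty$ then $\epsilon\to 0$ yields $\log\!\big(x^{-1-\alpha}W(x)\big)/\log V(x)\to\rho$, which is exactly membership in $\mathcal{M}_0(L,\rho)$. Part (ii), with $\alpha<-1$, is handled identically using the convergent-tail form of Karamata's theorem, now needing $\int_x^\infty t^{\alpha}V^{\rho\pm\epsilon}(t)\,dt<\infty$, which holds because $V^{\rho\pm\epsilon}\in RV_0$ and $\alpha<-1$.

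The main obstacle is the bookkeeping around the lower cutoff and the direction of the inequalities when $\rho\pm\epsilon$ may be negative: one must check that $V^{\rho+\epsilon}(x)$ (and not merely $U$) still dominates the constant contribution from $\int_a^{x_\epsilon}$, i.e. that $x^{-1-\alpha}=o(V^{\rho+\epsilon}(x))$ or at worst that the constant term is $o$ of the main term after normalization; since $V$ is slowly varying and tends to $\infty$, $V^{\rho+\epsilon}(x)\to\infty$ when $\rho+\epsilon>0$ and one chooses $\epsilon$ small, while if $\rho\le 0$ one instead observes directly that the constant term contributes $\log(\text{const})$ to the numerator, which is $o(\log V(x))$ and thus vanishes after dividing by $\int_a^x L(t)t^{-1}dt$. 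Handling that uniformly — and making sure the Karamata asymptotics are applied to genuinely regularly varying integrands with the correct index $\alpha$ — is the only delicate point; everything else is the routine sandwich argument.
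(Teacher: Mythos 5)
Your proposal is correct and follows essentially the same route as the paper: sandwich $U$ between $V^{\rho-\epsilon}$ and $V^{\rho+\epsilon}$ via Corollary~\ref{cor:7}, split the integral at $x_\epsilon$, apply the classical Karamata theorem to the regularly varying integrands $t^{\alpha}V^{\rho\pm\epsilon}(t)\in RV_{\alpha}$, and pass to logarithms divided by $\log V(x)\to\infty$ before letting $\epsilon\to 0$. The only cosmetic difference is that you agonize over the sign of $\rho\pm\epsilon$ when absorbing the constant term, whereas the paper disposes of it at once by noting that $x^{1+\alpha}V^{\rho\pm\epsilon}(x)\in RV_{1+\alpha}$ with $1+\alpha>0$ tends to infinity regardless.
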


\begin{remark} 
Proving the converse result  is an open problem. Moreover, it is not clear what happens when $\alpha =-1$.
\end{remark}

\begin{proof}~\\
(i) First consider the case where $\alpha >-1$. We choose $\epsilon>0$ and use \eqref{7} . Multiplying by $x^{\alpha }$ and taking integrals, we find that
\[
\int_{a}^{x_\epsilon} t^{\alpha }U(t)dt+\int_{x_\epsilon}^{x}t^{\alpha }V^{\rho -\epsilon }(t)dt\leq \int_{a}^{x}t^{\alpha
}U(t)dt\leq \int_{a}^{x_\epsilon} t^{\alpha }U(t)dt+\int_{x_\epsilon}^{x}t^{\alpha }V^{\rho +\epsilon }(t)dt\text{.} 
\]
Since $V\in RV_{0}$, we find that $\displaystyle V^{\rho \pm \epsilon}\in RV_{0}$, so $t^\alpha V^{\rho \pm \epsilon }(t)\in RV_{\alpha}$. Applying Karamata's theorem (see \cite{BinghamGT1989}, Proposition 1.5.8) gives that, under Assumption A, 
\[
\int_{x_\epsilon}^{x}t^{\alpha }V^{\rho \pm \epsilon }(t)dt\; \underset{x\to\infty}{\thicksim}\; \frac{1}{1+\alpha } x^{1+\alpha }V^{\rho \pm \epsilon }(x) \, \underset{x\to\infty}{\rightarrow}\, \infty .
\]
It follows that, there exists $x_0>x_\epsilon$ such that, for all $x\ge  x_0$, 
\[
\frac{1-\epsilon }{1+\alpha }V^{\rho -\epsilon }(x)\,\leq\, \frac{1}{x^{1+\alpha
}}\int_{a}^{x}t^{\alpha }U(t)\,dt \,\leq \,\frac{1+\epsilon }{1+\alpha }V^{\rho
+\epsilon }(x).
\]
Using \, $\log V(x) \underset{x\to\infty}{\rightarrow} \infty $ and taking $\epsilon\to 0$, we conclude that 
\[
\frac{1}{x^{1+\alpha }}\int_{a}^{x}t^{\alpha }U(t)\,dt \,\in \mathcal{M}
_{0}(L,\rho )\text{.} 
\]

(ii) In the case where $\alpha <-1$, the proof follows, using similar steps as for (i) with now (via Karamata) \,
$\displaystyle \int_{x}^{\infty }t^{\alpha }V^{\rho \pm \epsilon }(t)dt\thicksim \frac{-1}{1+\alpha }x^{1+\alpha }V^{\rho \pm \epsilon }(x)$.
\end{proof}

\subsubsection{Laplace transforms}

Recall that the Laplace transform of $U$ is given by $\displaystyle \widehat{U}(s)=s\int_{0}^{\infty }e^{-sx}U(x)dx$.
Assume now the conditions given in \cite{CKO2017}, Lemma 1.2, namely that $U$ is a nondecreasing right continuous function with support $\R ^{+}$and $U(0+)=0$,  that $\widehat{U}(s)<\infty ,\forall s>0$, and that $x^{-\eta }U(x)$ is a concave function for some real number $\eta >0$.
%
%
Then we have the following Karamata Tauberian type of theorem.

\begin{theorem}
Under Assumption A and the conditions of Lemma~1.2 in \cite{CKO2017} (recalled above), assuming $\rho>0$, we have: 
$$
U\in \mathcal{M}_{0}(L,\rho )\quad\text{ if and only if} \quad\widehat{U}(1/x)\in \mathcal{M}_{0}(L,\rho ).
$$
\end{theorem}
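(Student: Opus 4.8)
The plan is to reduce the Tauberian equivalence to the already-established characterization in Corollary~\ref{cor:7}, using the classical Karamata Tauberian machinery for the regularly varying ``envelope'' $V^{\rho\pm\epsilon}$. The key observation is that $V\in RV_0$ (by Assumption~A), so $V^{\rho\pm\epsilon}\in RV_0\subset RV$, and therefore the standard Karamata Tauberian theorem applies to functions comparable to $V^{\rho\pm\epsilon}$.

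First I would treat the forward direction. Assume $U\in\mathcal{M}_0(L,\rho)$. By Corollary~\ref{cor:7}, for each $\epsilon>0$ there is $x_\epsilon$ with $V^{\rho-\epsilon}(x)\le U(x)\le V^{\rho+\epsilon}(x)$ for $x\ge x_\epsilon$. Since $U$ is nondecreasing with $U(0+)=0$ and $\widehat U(s)<\infty$, and since $V^{\rho\pm\epsilon}\in RV_0$, the monotone density / Karamata Tauberian theorem (Bingham et al., Thm.~1.7.1 or its slowly-varying form) gives $\widehat U(1/x)\asymp U(x)$ up to the regularly varying constant: more precisely, because $V^{\rho\pm\epsilon}$ is slowly varying, $\int_0^\infty e^{-sx}V^{\rho\pm\epsilon}(x)\,dx \sim s^{-1}V^{\rho\pm\epsilon}(1/s)$ as $s\downarrow 0$, hence $s\int_0^\infty e^{-sx}V^{\rho\pm\epsilon}(x)\,dx\sim V^{\rho\pm\epsilon}(1/s)$. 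Combining the pointwise sandwich on $U$ with monotonicity of $U$ and splitting the Laplace integral at $x_\epsilon$ (the contribution of $[0,x_\epsilon]$ is $O(s)=o(1)$ relative to the divergent envelope since $\log V(x)\to\infty$), I get, for $s=1/x$ small,
\[
(1-o(1))\,V^{\rho-2\epsilon}(x)\;\le\;\widehat U(1/x)\;\le\;(1+o(1))\,V^{\rho+2\epsilon}(x).
\]
Taking logarithms, dividing by $\int_a^x L(t)t^{-1}dt$, and letting $\epsilon\to0$ (using $\log V(x)=\int_a^x L(t)t^{-1}dt\to\infty$ to kill the multiplicative $o(1)$ factors after taking logs) yields $\widehat U(1/x)\in\mathcal{M}_0(L,\rho)$; equivalently one may invoke Lemma~\ref{lem:meitner} with $g=V$ directly.

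For the converse, I would use the concavity hypothesis: since $x^{-\eta}U(x)$ is concave for some $\eta>0$, the de Haan–Stadtmüller type Tauberian condition of Lemma~1.2 in \cite{CKO2017} applies, and $\widehat U(1/x)\in\mathcal{M}_0(L,\rho)$ propagates back to $U$. Concretely, concavity of $x^{-\eta}U$ forces $U$ to have controlled oscillation (it is ``self-neglecting'' relative to any $RV_0$ scale), so an Abelian–Tauberian pair gives $U(x)\asymp\widehat U(1/x)$ up to factors that vanish after normalizing by $\log V$; then applying the already-proven forward direction's estimates in reverse, together with Corollary~\ref{cor:7} for $\widehat U(1/\cdot)$, sandwiches $U$ between $V^{\rho\pm\epsilon}$ and finishes via Theorem~\ref{th:charact}.

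The main obstacle I anticipate is the converse (Tauberian) direction: verifying rigorously that the concavity condition on $x^{-\eta}U$ is exactly what is needed to upgrade the Abelian estimate $\widehat U(1/x)\asymp U(x)$ to hold on the logarithmic scale used here, i.e. that the Tauberian remainder is $o\big(\int_a^x L(t)t^{-1}dt\big)$ after taking logarithms. This is precisely the role of Lemma~1.2 in \cite{CKO2017}, so the work is to quote it in the right form and check its hypotheses are met; the Abelian half and all the $RV_0$ asymptotics for $V^{\rho\pm\epsilon}$ are routine once Corollary~\ref{cor:7} is in hand.
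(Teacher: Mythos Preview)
Your proposal is correct and follows essentially the same route as the paper: sandwich $U$ between $V^{\rho\pm\epsilon}$ via Corollary~\ref{cor:7}, exploit $V\in RV_0$ so that the Karamata Tauberian theorem controls the Laplace transform of the envelope, and invoke Lemma~1.2 of \cite{CKO2017} (which under the concavity hypothesis gives two-sided bounds of the form $a\,U(x)\le \widehat U(1/x)\le b\,U(cx)$) to pass between $U$ and $\widehat U(1/\cdot)$. The only cosmetic difference is that the paper applies Lemma~1.2 of \cite{CKO2017} symmetrically for \emph{both} directions in a single stroke, whereas you handle the Abelian (forward) direction by the explicit Karamata argument on $V^{\rho\pm\epsilon}$ --- which is exactly what the paper does in the Proposition immediately following the theorem, where the concavity assumption is dropped for the forward implication.
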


\begin{proof}
First suppose that $U\in \mathcal{M}_{0}(L,\rho )$.  Using \eqref{7} with $V$ defined in \ref{V}, and Lemma~1.2 in \cite{CKO2017}, we can say that there exist positive constants $a,b,c$, such that, $\forall x>x_{\epsilon}$,
\[
a\,V^{\rho -\epsilon }(x)\, \leq \, \widehat{U}(1/x) \,\leq \, b\,V^{\rho +\epsilon}(cx).
\]
Since $V\in RV_{0}$, it follows that $\widehat{U}(1/x)\in \mathcal{M}_{0}(L,\rho )$. The converse implication can be proved in a similar way, using the same lemma.
\end{proof}

In fact, the implication $U\in \mathcal{M}_{0}(L,\rho )\Longrightarrow \widehat{U}(1/x)\in \mathcal{M}_{0}(L,\rho )$ can be proved without the concavity
condition:
\begin{proposition}
Let $U\in \mathcal{M}_{0}(L,\rho )$. Assume that $U(.)$ and $V(.)$ are bounded on bounded intervals and that $\widehat{U}(s)<\infty $, $\forall s>0$. Then, under Assumption A, $\widehat{U}(1/x)\in \mathcal{M}_{0}(L,\rho )$.
\end{proposition}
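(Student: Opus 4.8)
The plan is to establish the two asymptotic bounds required by Corollary~\ref{cor:7}, namely that for every $\epsilon>0$ one has $\widehat U(1/x)/V^{\rho+\epsilon}(x)\to 0$ and $\widehat U(1/x)/V^{\rho-\epsilon}(x)\to\infty$, and then invoke the characterization Theorem~\ref{th:charact} directly. The essential input is that, by Corollary~\ref{cor:7}, for every $\epsilon>0$ there is $x_\epsilon$ with $V^{\rho-\epsilon}(t)\le U(t)\le V^{\rho+\epsilon}(t)$ for all $t\ge x_\epsilon$, together with the fact that $V\in RV_0$ (noted in the proof of Theorem~\ref{th:charact}) and that $V(x)\to\infty$.

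First I would handle the upper bound. Write $\widehat U(1/x)=x^{-1}\int_0^\infty e^{-t/x}U(t)\,dt$ and split the integral at $x_\epsilon$. The contribution of $[0,x_\epsilon]$ is bounded by a constant depending on $\epsilon$ (using that $U$ is bounded on bounded intervals), which is $o(V^{\rho+\epsilon}(x))$ since $V(x)\to\infty$. For the tail $\int_{x_\epsilon}^\infty e^{-t/x}U(t)\,dt\le \int_{x_\epsilon}^\infty e^{-t/x}V^{\rho+\epsilon}(t)\,dt$, I would substitute $t=xs$ to get $x\int_{x_\epsilon/x}^\infty e^{-s}V^{\rho+\epsilon}(xs)\,ds$; since $V^{\rho+\epsilon}\in RV_0$, the uniform convergence theorem for regularly varying functions (Bingham--Goldie--Teugels, Theorem~1.5.2) gives $V^{\rho+\epsilon}(xs)/V^{\rho+\epsilon}(x)\to 1$ uniformly on compact $s$-sets, and a Potter-bound argument controls the integrability near $0$ and $\infty$, yielding $\int_0^\infty e^{-s}V^{\rho+\epsilon}(xs)\,ds\sim V^{\rho+\epsilon}(x)\,\Gamma(1)=V^{\rho+\epsilon}(x)$. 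Hence $x^{-1}\int_{x_\epsilon}^\infty e^{-t/x}U(t)\,dt \lesssim V^{\rho+\epsilon}(x)$. Combining, $\widehat U(1/x)/V^{\rho+2\epsilon}(x)\to 0$, and since $\epsilon$ is arbitrary this gives the first half of the characterization.

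For the lower bound, I would use monotonicity of $U$ (the hypothesis only asks $U$ bounded on bounded intervals, so here I would instead argue directly): bound $\widehat U(1/x)=x^{-1}\int_0^\infty e^{-t/x}U(t)\,dt\ge x^{-1}\int_{x_\epsilon}^\infty e^{-t/x}V^{\rho-\epsilon}(t)\,dt$, then substitute $t=xs$ and again apply the regular-variation/Potter estimate to get this $\sim V^{\rho-\epsilon}(x)$ up to a positive constant; since $V^{\rho-\epsilon}\in RV_0$ and $\epsilon$ is arbitrary, $\widehat U(1/x)/V^{\rho-2\epsilon}(x)\to\infty$. Then Theorem~\ref{th:charact} yields $\widehat U(1/x)\in\mathcal M_0(L,\rho)$.

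The main obstacle I expect is making the interchange of asymptotics and integration rigorous without any concavity or monotonicity crutch: one needs a clean uniform (Potter-type) bound $V^{\rho\pm\epsilon}(xs)\le C\,V^{\rho\pm\epsilon}(x)\max(s^{\delta},s^{-\delta})$ valid for all $s$ above some threshold and all large $x$, so that dominated convergence applies to $\int e^{-s}V^{\rho\pm\epsilon}(xs)\,ds$; the region $s$ near $0$ must also be dominated, which is where $\rho>0$ (assumed in the theorem above, and inherited here via the context) — or the finiteness $\widehat U(s)<\infty$ — is used to rule out a non-integrable blow-up of $U(t)$ near $0$ after scaling. Everything else is bookkeeping with the substitution $t=xs$ and the fact that $V\in RV_0$.
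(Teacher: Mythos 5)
Your argument is correct and follows essentially the same route as the paper: both split the Laplace integral at $x_\epsilon$, sandwich the tail between $V^{\rho-\epsilon}$ and $V^{\rho+\epsilon}$ via \eqref{7}, absorb the bounded initial segment, and rest on the Abelian asymptotic $s\int_0^\infty e^{-sx}V^{\rho\pm\epsilon}(x)\,dx\sim V^{\rho\pm\epsilon}(1/s)$ as $s\to 0$. The only difference is that you re-derive that asymptotic by hand (substitution $t=xs$, uniform convergence, Potter bounds) where the paper simply cites Theorem~1.7.1 of \cite{BinghamGT1989}; note also that your worry about needing $\rho>0$ is unfounded, since the Potter bound together with boundedness of $U$ and $V$ on bounded intervals already controls the integrand near $s=0$.
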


\begin{proof}
Using \eqref{7}, we have
$$
s\!\!\int_{0}^{x_\epsilon} \!\!\!\! e^{-sx}U(x)dx+s\!\!\int_{x_\epsilon}^{\infty } \!\!\! \!e^{-sx}V^{\rho -\epsilon }(x)dx\leq \widehat{U}(s)
\leq s\!\!\int_{0}^{x_\epsilon} \!\!\!\! e^{-sx}U(x)dx+s\!\!\int_{x_\epsilon}^{\infty } \!\!\!\! e^{-sx} V^{\rho +\epsilon }(x)dx\text{.} 
$$
Since $V\in RV_{0}$, we have $s\int_{0}^{\infty }e^{-sx}V^{\rho \pm \epsilon
}(x)dx \underset{s\to0}{\thicksim} V^{\rho \pm \epsilon }(1/s)$ (see \cite{BinghamGT1989}, Theorem 1.7.1). Also, since $V^{\rho \pm \epsilon }$
and $U$ are bounded on bounded intervals, we have
$$
s\int_{0}^{x_\epsilon}e^{-sx}V^{\rho \pm \epsilon }(x)dx =O(1) \quad\text{and}\quad
s\int_{0}^{x_\epsilon} e^{-sx}U(x)dx =O(1),
$$
hence $\displaystyle (1-\epsilon )V^{\rho -\epsilon }(s)+O(1)\leq \widehat{U}(1/s)\leq (1+\epsilon )V^{\rho +\epsilon }(s)+O(1)$. \\
Since, under Assumption A, $V(x)\rightarrow \infty $, we find that $\widehat{U}(1/x)\in \mathcal{M}_{0}(L,\rho )$.
\end{proof}

\textbf{Remark}. We could also consider $O-$versions of this class of functions.

\subsection{The class $\mathcal{M}_{0}^{-}(L,\protect\rho )$}
 
Recall that for the class $\mathcal{M}_{0}^{-}(L,\rho )$, we assume that 
$L$ satisfies Assumption~C. It implies that $L(x)\underset{x\rightarrow\infty}{\rightarrow} 0$. 
Moreover if $U\in \mathcal{M}_{0}^{-}(L,\rho )$, then $ \log U(x) \underset{x\to\infty}{\rightarrow} 0$, hence $U(x) \underset{x\to\infty}{\rightarrow} 1$. The following relation follows.
\begin{proposition}
Under Assumption C,  
$$
\eqref{4}\,\text{holds}\quad \Longleftrightarrow \quad \lim_{x\rightarrow \infty }\frac{U(x)-1}{\int_{x}^{\infty }L(t)t^{-1}dt}=\rho .
$$
\end{proposition}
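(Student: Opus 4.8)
The plan is to exploit the elementary equivalence $\log u\thicksim u-1$ as $u\to 1$, together with the fact that under Assumption~C the common denominator $\int_{x}^{\infty}L(t)t^{-1}dt$ tends to $0$ as $x\to\infty$. Introduce the two auxiliary functions
\[
\phi(y)=\frac{e^{y}-1}{y}\ \ (y\neq 0),\quad \phi(0)=1,\qquad
\psi(y)=\frac{\log(1+y)}{y}\ \ (y\neq 0),\quad \psi(0)=1,
\]
both continuous at $0$. Since $U$ is positive, the identities $U(x)-1=e^{\log U(x)}-1=\log U(x)\,\phi(\log U(x))$ and $\log U(x)=\log\bigl(1+(U(x)-1)\bigr)=(U(x)-1)\,\psi(U(x)-1)$ hold for every $x$.

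For the forward implication, assume \eqref{4}. As $\int_{x}^{\infty}L(t)t^{-1}dt\to 0$, relation \eqref{4} forces $\log U(x)\to 0$, hence $U(x)\to 1$ and $\phi(\log U(x))\to\phi(0)=1$. Dividing the first identity above by $\int_{x}^{\infty}L(t)t^{-1}dt$ gives
\[
\frac{U(x)-1}{\int_{x}^{\infty}L(t)t^{-1}dt}=\frac{\log U(x)}{\int_{x}^{\infty}L(t)t^{-1}dt}\cdot\phi(\log U(x))\ \underset{x\to\infty}{\longrightarrow}\ \rho\cdot 1=\rho .
\]

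Conversely, assume $\lim_{x\to\infty}(U(x)-1)/\int_{x}^{\infty}L(t)t^{-1}dt=\rho$. Again the denominator tends to $0$, so $U(x)-1\to 0$, i.e. $U(x)\to 1$ and $\psi(U(x)-1)\to\psi(0)=1$. Dividing the second identity by $\int_{x}^{\infty}L(t)t^{-1}dt$ yields
\[
\frac{\log U(x)}{\int_{x}^{\infty}L(t)t^{-1}dt}=\frac{U(x)-1}{\int_{x}^{\infty}L(t)t^{-1}dt}\cdot\psi(U(x)-1)\ \underset{x\to\infty}{\longrightarrow}\ \rho ,
\]
which is \eqref{4}.

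There is essentially no deep obstacle here; the only point requiring a little care is that neither $\log U(x)$ nor $U(x)-1$ need be nonzero along the whole half-line, which is precisely why the argument is routed through the globally valid identities with the continuously extended factors $\phi$ and $\psi$, rather than through a bare ratio $\log U(x)/(U(x)-1)$. Everything else is immediate once one observes that Assumption~C makes the denominator vanish at infinity, which in turn pins $U$ down to the limit $1$ in both directions of the equivalence.
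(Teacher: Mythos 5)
Your proof is correct and follows the same route the paper intends: under Assumption~C the denominator $\int_{x}^{\infty}L(t)t^{-1}dt$ tends to $0$, which forces $U(x)\to 1$ in either direction, and the equivalence then reduces to $\log u\thicksim u-1$ as $u\to 1$. The paper leaves this as an immediate consequence of the remark that $U(x)\to 1$; your only addition is the (welcome) care with the continuously extended factors $\phi$ and $\psi$ to avoid dividing by a possibly vanishing $\log U(x)$ or $U(x)-1$.
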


{\it Remark}. The proposition implies that $\mathcal{M}_{0}^{-}(L,\rho )\subset RV_{0}$.

In the next result we obtain a representation theorem.

\begin{theorem}
Under Assumption~C, we have
$$
U\in \mathcal{M}_{0}^{-}(L,\rho )\quad\Leftrightarrow \quad 
\log U(x)=\alpha (x)+\int_{x}^{\infty }\beta (t)L(t)t^{-1}dt,
$$
where, as $x\to\infty$, $\beta (x)\rightarrow \rho $ and $\displaystyle \alpha (x)=o(1)\int_{x}^{\infty
}L(t)t^{-1}dt$.
\end{theorem}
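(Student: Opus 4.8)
The plan is to mirror the proof of the representation theorem for $\mathcal{M}_{0}(L,\rho )$, simply replacing the running integral $\int_{a}^{x}L(t)t^{-1}dt$ by the tail integral. Write $\displaystyle \Lambda (x):=\int_{x}^{\infty }L(t)t^{-1}dt$, which under Assumption~C is finite, positive, continuous, decreasing, tends to $0$ as $x\to \infty $, and satisfies $\Lambda ^{\prime }(x)=-L(x)/x$. All statements below are understood for $x$ large enough.

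For the implication ``$\Rightarrow$'': by the definition \eqref{4} of $\mathcal{M}_{0}^{-}(L,\rho )$, we may write
$$
\log U(x)=(\rho +\epsilon (x))\,\Lambda (x),\qquad \epsilon (x)\underset{x\to \infty }{\rightarrow }0 ,
$$
with $\epsilon $ measurable. Set $\beta (t):=\rho +\epsilon (t)$, so that $\beta (t)\rightarrow \rho $, and define
$$
\alpha (x):=\log U(x)-\int_{x}^{\infty }\beta (t)L(t)t^{-1}dt=\epsilon (x)\Lambda (x)-\int_{x}^{\infty }\epsilon (t)L(t)t^{-1}dt .
$$
The representation identity then holds by construction, so it only remains to check that $\alpha (x)=o(1)\Lambda (x)$. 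The first term is $\epsilon (x)\Lambda (x)=o(1)\Lambda (x)$ trivially; for the second, given $\delta >0$ choose $t_{\delta }$ with $|\epsilon (t)|<\delta $ for $t\ge t_{\delta }$, so that for $x\ge t_{\delta }$
$$
\Big|\int_{x}^{\infty }\epsilon (t)L(t)t^{-1}dt\Big|\le \delta \int_{x}^{\infty }L(t)t^{-1}dt=\delta \,\Lambda (x) ,
$$
which shows both that the tail integral converges and that this term is $o(1)\Lambda (x)$. Hence $U$ has the announced form.

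For the converse ``$\Leftarrow$'': assume $\log U(x)=\alpha (x)+\int_{x}^{\infty }\beta (t)L(t)t^{-1}dt$ with $\beta (t)\rightarrow \rho $ and $\alpha (x)=o(1)\Lambda (x)$. Since $\beta $ is bounded for large $t$, the tail integral converges; dividing the identity by $\Lambda (x)$, the term $\alpha (x)/\Lambda (x)\rightarrow 0$ by hypothesis, while writing $\beta (t)=\rho +o(1)$ and applying the same $\delta $-estimate yields $\int_{x}^{\infty }\beta (t)L(t)t^{-1}dt=\rho \,\Lambda (x)+o(1)\Lambda (x)$. Therefore $\log U(x)/\Lambda (x)\rightarrow \rho $, i.e.\ $U\in \mathcal{M}_{0}^{-}(L,\rho )$.

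I do not expect a genuine obstacle here: the argument is essentially bookkeeping parallel to the $\mathcal{M}_{0}(L,\rho )$ case. The only points deserving a little care are the convergence and the $o(1)\Lambda (x)$-control of the tail integrals $\int_{x}^{\infty }\epsilon (t)L(t)t^{-1}dt$ (resp.\ with $\beta $), both handled by the boundedness of $\epsilon $ (resp.\ $\beta $) near infinity together with Assumption~C, and the fact that $\epsilon $, hence $\alpha $, is only controlled for large $x$ — on an initial bounded interval one simply sets $\beta \equiv \rho $ there and absorbs the remainder into $\alpha $, which does not affect the asymptotic conditions. As in the $\mathcal{M}_{0}(L,\rho )$ representation, one should also note that the pair $(\alpha ,\beta )$ is not unique.
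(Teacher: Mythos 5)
Your proof is correct and follows essentially the same route as the paper: you use the identical decomposition $\beta(t)=\rho+\epsilon(t)$ and $\alpha(x)=\epsilon(x)\int_{x}^{\infty}L(t)t^{-1}dt-\int_{x}^{\infty}\epsilon(t)L(t)t^{-1}dt$, merely spelling out the $o(1)$ tail estimate and the converse, which the paper leaves as ``the result follows'' and ``straightforward.''
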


\begin{proof}
First assume $U\in \mathcal{M}_{0}^{-}(L,\rho )$. Define the function $\varepsilon$, with $\varepsilon (x)\underset{x\to\infty}{\rightarrow} 0$, as
\[
\varepsilon (x)=\frac{\log U(x)}{\int_{x}^{\infty }L(t)t^{-1}dt}-\rho \text{.
} 
\]
We have $\displaystyle \log U(x) =(\varepsilon (x)+\rho )\int_{x}^{\infty }L(t)t^{-1}dt =\alpha (x)+\int_{x}^{\infty }\beta (t)L(t)t^{-1}dt$,
where $\beta (x)=\varepsilon (x)+\rho $ and 
$\displaystyle \alpha (x)=\varepsilon (x)\int_{x}^{\infty }L(t)t^{-1}dt-\int_{x}^{\infty}\varepsilon (t)L(t)t^{-1}dt$.
The result follows. The converse result is straighforward.
\end{proof}

\subsection{The class $\mathcal{M}_{0}^{+}(L,\protect\rho )$}

Here we consider $U\in \mathcal{M}_{0}^{+}(L,\rho )$, and {throughout this section we assume
that $L$ satisfies Assumption~B.

\subsubsection{Some properties}

\begin{proposition}~ 
\begin{itemize}
\item[(i)] Suppose that $U\in \mathcal{M}_{0}^{+}(L,\rho )$.
If $\rho >0$, then $U\in \mathcal{M}(\infty )$.\\ 
If $\rho <0$, then $U\in  \mathcal{M}(-\infty )$.
\item[(ii)] Suppose that $U\in \mathcal{M}(\rho )$. Then $U\in \mathcal{M}_{0}^{+}(L,0)$.
\end{itemize}
\end{proposition}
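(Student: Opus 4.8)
The plan is to compare the normalizing function $\int_a^x L(t)t^{-1}\,dt$ appearing in the definition of $\mathcal{M}_0^+(L,\rho)$ with the logarithm $\log x$ that defines the classes $\mathcal{M}(\cdot)$. First I would record the one auxiliary limit that drives everything: Assumption~B gives $\int_a^\infty L(t)t^{-1}\,dt=\infty$, so $\int_a^x L(t)t^{-1}\,dt\to\infty$, and L'Hopital's rule then yields
\[
\lim_{x\to\infty}\frac{\int_a^x L(t)t^{-1}\,dt}{\log x}=\lim_{x\to\infty}L(x)=\infty,
\]
again by Assumption~B. Note also that the structural requirements (positivity, measurability, support $\R^+$, boundedness on finite intervals) are common to both classes, so nothing beyond the limit relations needs checking.

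For (i), I would factor
\[
\frac{\log U(x)}{\log x}=\frac{\log U(x)}{\int_a^x L(t)t^{-1}\,dt}\times\frac{\int_a^x L(t)t^{-1}\,dt}{\log x}.
\]
By hypothesis $U\in\mathcal{M}_0^+(L,\rho)$ the first factor tends to $\rho$, while the second tends to $+\infty$ by the displayed limit. Since $\rho\neq 0$, the product tends to $+\infty$ when $\rho>0$ and to $-\infty$ when $\rho<0$, i.e. $U\in\mathcal{M}(\infty)$ or $U\in\mathcal{M}(-\infty)$ respectively.

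For (ii), I would use the reciprocal factorization
\[
\frac{\log U(x)}{\int_a^x L(t)t^{-1}\,dt}=\frac{\log U(x)}{\log x}\times\frac{\log x}{\int_a^x L(t)t^{-1}\,dt}.
\]
The first factor tends to the finite real number $\rho$ because $U\in\mathcal{M}(\rho)$, and the second tends to $0$ by the displayed limit above, so the product tends to $0$; hence $U\in\mathcal{M}_0^+(L,0)$.

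The only delicate point — and it is the main (mild) obstacle — is justifying the product-of-limits step in (i), where one factor diverges: this is legitimate precisely because $\rho$ is a \emph{nonzero} real, so eventually $\log U(x)/\log x$ has the constant sign of $\rho$ and $|\log U(x)/\log x|\ge (|\rho|/2)\,\int_a^x L(t)t^{-1}\,dt/\log x\to\infty$. Everything else is a routine application of L'Hopital's rule combined with Assumption~B.
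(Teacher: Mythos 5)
Your proof is correct. Part (i) is essentially the paper's own argument: the same factorization of $\log U(x)/\log x$ through $\int_{a}^{x}L(t)t^{-1}\,dt$, with L'Hopital giving $\int_{a}^{x}L(t)t^{-1}\,dt/\log x \to \lim_{x\to\infty}L(x)=\infty$ under Assumption~B; your closing remark justifying the product of a nonzero finite limit with a divergent factor is a standard but welcome extra precision. Part (ii), however, takes a genuinely different and more economical route. The paper first invokes the representation theorem for $\mathcal{M}(\rho)$ to write $\log U(x)=\log c(x)+\int_{a}^{x}\beta(t)t^{-1}\,dt$ with $\log c(x)/\log x\to 0$ and $\beta(x)\to\rho$, and then treats the two pieces separately (the second via a further application of L'Hopital, $\int_{a}^{x}\beta(t)t^{-1}\,dt\big/\int_{a}^{x}L(t)t^{-1}\,dt\to\lim_{x\to\infty}\beta(x)/L(x)=0$). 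You instead multiply the defining limit $\log U(x)/\log x\to\rho$ (finite) by $\log x\big/\int_{a}^{x}L(t)t^{-1}\,dt\to 0$, which gives $0$ immediately since both factors converge to finite limits. Your version needs only the definition of $\mathcal{M}(\rho)$ and is, if anything, cleaner; what the paper's version buys is an explicit additive decomposition of $\log U$, in the spirit of its other representation-theorem arguments, but that machinery is not needed for this statement. I see no gap in either part.
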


\begin{proof}
(i) We have 
$$
\lim_{x\rightarrow \infty }\frac{\log U(x)}{\log x} =\lim_{x\rightarrow\infty }\frac{\log U(x)}{\int_{a}^{x}L(t)t^{-1}dt}\times \frac{
\int_{a}^{x}L(t)t^{-1}dt}{\log x} =\rho \times \lim_{x\rightarrow \infty }\frac{\int_{a}^{x}L(t)t^{-1}dt}{\log x}.
$$
The result follows since $\displaystyle \lim_{x\rightarrow \infty}\int_{a}^{x}L(t)t^{-1}dt/\log x=\infty $ by l'Hopital's rule. 

(ii) Using the representation theorem for $\mathcal{M}(\rho )$ (see \cite{CKO2017}, Theorem 1.2), we have
\[
U(x)=c(x)\exp \int_{a}^{x}\beta (t)t^{-1}dt\text{,} \; \text{with}\;\log c(x)/\log x\underset{x\to\infty}{\rightarrow} 0\; \text{and}\;\beta (x)\underset{x\to\infty}{\rightarrow}  \rho.
\]
Taking logarithms, we obtain, using l'Hopital's rule and Assumption B, 
\[
\lim_{x\rightarrow \infty } \frac{\log U(x)}{\int_{a}^{x}L(t)t^{-1}dt}=\lim_{x\rightarrow \infty } \frac{\log c(x)}{\log x}\frac{\log x}{\int_{a}^{x}L(t)t^{-1}dt}+\lim_{x\rightarrow \infty } \frac{\int_{a}^{x}\beta
(t)t^{-1}dt}{\int_{a}^{x}L(t)t^{-1}dt}=0. 
\vspace{-3ex}
\]
\end{proof}

In our next result, we collect some algebraic results.
\begin{lemma}
Suppose that $U\in \mathcal{M}_{0}^{+}(L_{1},\alpha )$.

(i) If $V\in \mathcal{M}_{0}^{+}(L_{1},\beta )$, then $UV\in \mathcal{M}_{0}^{+}(L_{1},\alpha +\beta )$ and $U/V\in \mathcal{M}_{0}^{+}(L_{1},\alpha
-\beta )$;

(ii) If $V\in \mathcal{M}_{0}^{+}(L_{2},\beta )$ and $L_{2}(x)/L_{1}(x)
\underset{x\to\infty}{\rightarrow}  0$, then $UV\in \mathcal{M}_{0}^{+}(L_{1},\alpha )$;

(iii) If $xV^{\prime }(x)/V(x)\underset{x\to\infty}{\rightarrow}  \beta >0$, then $U\circ V\in 
\mathcal{M}_{0}^{+}(K,\alpha \beta )$, where $K=L\circ V$.
\end{lemma}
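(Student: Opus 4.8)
\emph{Proof plan.} The strategy is to repeat, essentially verbatim, the argument of Lemma~\ref{lem:proptiesA}, since $\mathcal{M}_{0}^{+}(L,\rho)$ is defined by the same relation~\eqref{3} and the only change is that the normalizers now satisfy Assumption~B instead of Assumption~A; in particular $\int_{a}^{x}L(t)t^{-1}dt\to\infty$, which is all the l'Hopital steps below require. Throughout write $I_{j}(x):=\int_{a}^{x}L_{j}(t)t^{-1}dt$.

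Part (i) I would obtain directly from the definition~\eqref{3}: from $\log(UV)(x)=\log U(x)+\log V(x)$ one gets $\log(UV)(x)/I_{1}(x)\to\alpha+\beta$, and from $\log(U/V)(x)=\log U(x)-\log V(x)$ one gets $\log(U/V)(x)/I_{1}(x)\to\alpha-\beta$; positivity, measurability and boundedness on finite intervals are preserved under products and quotients, so $UV$ and $U/V$ are admissible. For part (ii) I would split
\[
\frac{\log(UV)(x)}{I_{1}(x)}=\frac{\log U(x)}{I_{1}(x)}+\frac{\log V(x)}{I_{2}(x)}\cdot\frac{I_{2}(x)}{I_{1}(x)}.
\]
Here the first term tends to $\alpha$ and $\log V(x)/I_{2}(x)\to\beta$ by hypothesis, while $I_{2}(x)/I_{1}(x)\to 0$ by l'Hopital's rule, because $I_{1}(x)\to\infty$ and the ratio of derivatives equals $L_{2}(x)/L_{1}(x)\to 0$. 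Hence the right-hand side tends to $\alpha+\beta\cdot 0=\alpha$, i.e. $UV\in\mathcal{M}_{0}^{+}(L_{1},\alpha)$.

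For part (iii) I would proceed exactly as in Lemma~\ref{lem:proptiesA}(iii). The hypothesis $xV'(x)/V(x)\to\beta>0$ makes $V$ eventually strictly increasing, and integrating $V'/V\sim\beta/x$ gives $V(x)\to\infty$; consequently $K(x)=L(V(x))\to\infty$ by Assumption~B for $L$, so that the target class $\mathcal{M}_{0}^{+}(K,\cdot)$ is well defined. From $U\in\mathcal{M}_{0}^{+}(L,\alpha)$ and $V(x)\to\infty$ we have $\log U(V(x))/\int_{a}^{V(x)}L(t)t^{-1}dt\to\alpha$; the substitution $t=V(z)$ turns $\int_{a}^{V(x)}L(t)t^{-1}dt$ into $\int_{b}^{x}L(V(z))\,\frac{zV'(z)}{V(z)}\,z^{-1}dz$, and since $\int_{b}^{x}L(V(z))z^{-1}dz\to\infty$ and $zV'(z)/V(z)\to\beta$, l'Hopital's rule gives
\[
\frac{\int_{a}^{V(x)}L(t)t^{-1}dt}{\int_{b}^{x}L(V(z))z^{-1}dz}\longrightarrow\beta.
\]
Multiplying the two limits yields $\log U(V(x))/\int_{b}^{x}L(V(z))z^{-1}dz\to\alpha\beta$, that is, $U\circ V\in\mathcal{M}_{0}^{+}(K,\alpha\beta)$.

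I do not expect any genuine obstacle; the argument is routine once Lemma~\ref{lem:proptiesA} is in place. The only points that deserve care — and that constitute the actual differences from the $\mathcal{M}_{0}(L,\rho)$ setting — are of a bookkeeping nature: in (ii) one uses that Assumption~B forces $I_{1}(x)\to\infty$ before invoking l'Hopital; and in (iii) one must first verify that $K=L\circ V$ really does satisfy Assumption~B (so that the claimed class exists) and that $V(x)\to\infty$ with $V$ eventually strictly increasing, which is what legitimizes both the substitution $t=V(z)$ and the final l'Hopital step.
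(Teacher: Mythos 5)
Your proposal is correct and follows essentially the same route as the paper, which disposes of (i) and (ii) directly from the definition and of (iii) by repeating the argument of Lemma~\ref{lem:proptiesA}; you merely spell out the details (the l'Hopital step giving $I_{2}/I_{1}\to 0$ in (ii), and the verification that $V\to\infty$ and $K=L\circ V$ satisfies Assumption~B in (iii)) that the paper leaves implicit.
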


\begin{proof}
(i) and (ii) follow from the definition, whereas (iii) follows as in Lemma~\ref{lem:proptiesA}.
\end{proof}

\subsubsection{Characterization theorem}

Following the proof and the notation of \eqref{V}, Lemma~\ref{lem:meitner} and Theorem~\ref{th:charact},  we have the
following result.
\begin{theorem}\label{theo:16}
We have the following equivalence:
$$
 U\in \mathcal{M}_{0}^{+}(L,\rho ) \quad \Leftrightarrow \quad
\forall \epsilon >0,\; U(x)/V^{\rho+\epsilon }(x)\underset{x\to\infty}{\rightarrow}  0 \;\, \text{and}\;\,  U(x)/V^{\rho -\epsilon }(x)\underset{x\to\infty}{\rightarrow} \infty.
$$
\end{theorem}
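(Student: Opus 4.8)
The plan is to derive Theorem~\ref{theo:16} from the general equivalence in Lemma~\ref{lem:meitner}, exactly as Theorem~\ref{th:charact} was obtained, the only difference being which of the hypotheses on $L$ (Assumption~A versus Assumption~B) is used to justify that the normalizing function $g$ tends to infinity. First I would recall that, with $V$ defined by \eqref{V}, we have $\log V(x)=\int_a^x L(t)t^{-1}dt$, so that the defining relation \eqref{3} for membership in $\mathcal{M}_0^+(L,\rho)$ reads precisely $\log U(x)/\log V(x)\to\rho$.

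Next I would check that the hypothesis $g:=\log V$ in Lemma~\ref{lem:meitner} is satisfied, i.e. that $\log V(x)\to\infty$ as $x\to\infty$. Under Assumption~B we have $L(x)\to\infty$, and the note following Assumption~B already records that this forces $\int_a^\infty L(t)t^{-1}dt=\infty$; hence $\log V(x)=\int_a^x L(t)t^{-1}dt\to\infty$. (Unlike in Theorem~\ref{th:charact}, here $V\in RV_\infty$ rather than $RV_0$, but that plays no role: Lemma~\ref{lem:meitner} only asks for $g\to\infty$ and makes no regular-variation assumption.)

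Then I would simply apply Lemma~\ref{lem:meitner} with this choice of $g$. Statement (i) of the lemma is, verbatim, the condition $U\in\mathcal{M}_0^+(L,\rho)$, and statement (ii) of the lemma, after substituting $e^{-(\rho\pm\epsilon)g(x)}=V^{-(\rho\pm\epsilon)}(x)$, becomes exactly the pair of limits $U(x)/V^{\rho+\epsilon}(x)\to 0$ and $U(x)/V^{\rho-\epsilon}(x)\to\infty$ for every $\epsilon>0$. The equivalence of the two statements in the lemma therefore yields the asserted equivalence, completing the proof.

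There is essentially no obstacle: the content was already isolated in Lemma~\ref{lem:meitner}, and the work consists only in verifying that Assumption~B supplies the single hypothesis $g\to\infty$ that the lemma requires. The one point worth stating explicitly in the write-up is why $L(x)\to\infty$ implies $\int_a^\infty L(t)t^{-1}dt=\infty$ — but this is immediate, and is in any case already noted in the text just below Assumption~B, so it can simply be cited there. Consequently the proof can be given in a few lines, and I would present it as: "As noted, Assumption~B gives $\int_a^\infty L(t)t^{-1}dt=\infty$, so $g:=\log V\to\infty$; apply Lemma~\ref{lem:meitner} with this $g$, noting that $e^{-(\rho\pm\epsilon)g(x)}=V^{-(\rho\pm\epsilon)}(x)$."
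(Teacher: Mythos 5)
Your proposal is correct and follows essentially the same route as the paper, which simply invokes Lemma~\ref{lem:meitner} and the argument of Theorem~\ref{th:charact} with $g:=\log V$ (the paper writes ``$g:=V$'' there, but your identification $g=\log V=\int_a^x L(t)t^{-1}dt$ is the correct reading). Your explicit check that Assumption~B forces $g\to\infty$ is exactly the one point that changes relative to Theorem~\ref{th:charact}, and the rest is a direct substitution, as you say.
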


\begin{remark}
Let $x>0$ and $t>1$.\ We have 
\[
\frac{V(xt)}{V(x)}=\exp\left\{ \int_{1}^{t}L(xy)y^{-1}dy \right\}.
\]%
Since $L(x)\underset{x\to\infty}{\rightarrow}  \infty $, it follows that, for $t>1$,  $V(xt)/V(x)\underset{x\to\infty}{\rightarrow}  \infty $, whereas for $t<1$, we have $V(xt)/V(x)\underset{x\to\infty}{\rightarrow}  0$. 
It shows that, under Assumption B,  $V$ is rapidly varying. In Section~\ref{sec:M1}, we will discuss other conditions on $V$.
\end{remark}

\subsubsection{Integrals}

In this section, we consider integrals of the form $\int_{0}^{x}U(t)dt$ or $\int_{x}^{\infty }U(t)dt$. 
\begin{proposition}
Introduce the function $W$ defined by $\displaystyle W(x)=x/L(x)$.
Assume that $U$ is differentiable and satisfies $\displaystyle U(x) \underset{x\to\infty}{\thicksim} \frac1{\rho}\, W(x)U^{\prime }(x)$,  $\rho >0$. Then we have:
\begin{itemize}
\item[(i)] If \, $\displaystyle W^{\prime }(x)>0$ and \, $W^{\prime }(x)\underset{x\to\infty}{\rightarrow}\alpha \geq 0$, then \; $\displaystyle U\in \mathcal{M}_{0}^{+}(L,\rho )$, 
$$
\int_{a}^{x}U(t)dt \underset{x\to\infty}{\thicksim} \frac1{\rho+\alpha}\,W(x)U(x) \quad \text{and}\quad \int_{a}^{x}U(t)dt\in \mathcal{M}_{0}^{+}(L,\rho +\alpha ).
$$ 
\item[(ii)] If \, $\displaystyle W^{\prime}(x)\underset{x\to\infty}{\rightarrow} \infty$, then \; $\displaystyle \int_{a}^{x}U(t)dt=o(1)W(x)U(x)$ \; and \\ $\displaystyle \int_{0}^{x}U(t)dt\in \mathcal{M}_{0}^{+}(L,0)$.
\end{itemize}
\end{proposition}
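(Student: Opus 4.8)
The hypothesis $U(x)\thicksim \rho^{-1}W(x)U'(x)$ is the same as $xU'(x)/(L(x)U(x))\to\rho$, i.e. exactly Example~5 above with $\alpha=\rho$; under Assumption~B this already gives $U\in\mathcal{M}_0^{+}(L,\rho)$, and, $\rho$ being positive, $U\in\mathcal{M}(\infty)$ by the first Proposition of the present section, so that $U(x)\to\infty$ and $\int_a^x U(t)\,dt\to\infty$. The workhorse for both parts is the identity
\[
\frac{d}{dx}\bigl(W(x)U(x)\bigr)=W'(x)U(x)+W(x)U'(x)=\bigl(W'(x)+\rho+o(1)\bigr)U(x),
\]
where the last step uses the hypothesis in the form $W(x)U'(x)=(\rho+o(1))U(x)$.

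\textbf{Part (i).} Apply l'Hopital's rule to $W(x)U(x)\big/\int_a^x U(t)\,dt$ (the denominator tends to $\infty$): the ratio of derivatives is $W'(x)+\rho+o(1)\to\rho+\alpha>0$, so $\int_a^x U(t)\,dt\thicksim\frac1{\rho+\alpha}\,W(x)U(x)$. Taking logarithms, $\log\int_a^x U(t)\,dt=\log U(x)+\log W(x)-\log(\rho+\alpha)+o(1)$; dividing by $\int_a^x L(t)t^{-1}\,dt$ and letting $x\to\infty$, the three terms contribute $\rho$, $\alpha$ and $0$ respectively — the middle one again by l'Hopital, since $(\log W(x))'\big/(L(x)/x)=\tfrac{W'(x)}{W(x)}\cdot\tfrac{x}{L(x)}=W'(x)\to\alpha$. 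Hence $\int_a^x U(t)\,dt\in\mathcal{M}_0^{+}(L,\rho+\alpha)$.

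\textbf{Part (ii).} The same computation, now with $W'(x)\to\infty$, makes the ratio of derivatives tend to $\infty$, so l'Hopital gives $W(x)U(x)\big/\int_a^x U(t)\,dt\to\infty$, i.e. $\int_a^x U(t)\,dt=o(1)W(x)U(x)$; moreover $\int_0^x U(t)\,dt=\int_0^a U(t)\,dt+\int_a^x U(t)\,dt\thicksim\int_a^x U(t)\,dt$, the first integral being a finite constant since $U$ is bounded on $[0,a]$. It then remains to evaluate $\log\int_a^x U(t)\,dt\big/\int_a^x L(t)t^{-1}\,dt$.

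\textbf{Main obstacle.} The membership claim in (ii) is the delicate point. The estimate $\int_a^x U=o(WU)$ only yields $\log\int_a^x U(t)\,dt\le\log W(x)+\log U(x)$, which is of order $\rho\neq 0$ with respect to $\int_a^x L(t)t^{-1}\,dt$ (a matching lower bound of the same order coming from the eventual monotonicity of $U$, as $U'(x)>0$ for large $x$); so the $o$-estimate does not by itself deliver the class $\mathcal{M}_0^{+}(L,0)$. The resolution I would give is that case (ii) is vacuous under Assumption~B: $W'(x)\to\infty$ forces $W(x)/x\to\infty$, whereas $W(x)/x=1/L(x)\to 0$ because $L(x)\to\infty$, so the statement holds trivially. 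The same observation shows that in (i) one necessarily has $\alpha=0$, so that there the conclusion reads $\int_a^x U(t)\,dt\thicksim\rho^{-1}W(x)U(x)\in\mathcal{M}_0^{+}(L,\rho)$.
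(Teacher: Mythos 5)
Your argument for part (i) is correct and takes essentially the paper's route: the paper also obtains $\int_a^xU(t)\,dt\sim(\rho+\alpha)^{-1}W(x)U(x)$ by l'Hopital (phrased via the ratio $R(x)=W(x)U(x)\big/\int_a^xW(t)U'(t)\,dt$) and then concludes the class membership by feeding $xI'(x)/(L(x)I(x))\to\rho+\alpha$, $I(x)=\int_a^xU(t)\,dt$, back into Example~5 of Section~2.2, which is the same computation as your direct logarithmic one. The genuine difference is in part (ii). The paper's own proof establishes only the relation $\int_a^xU(t)\,dt=o(1)W(x)U(x)$ and is silent on the claim $\int_0^xU(t)\,dt\in\mathcal{M}_0^{+}(L,0)$; you are right that this membership cannot be extracted from the $o$-estimate, and your lower-bound remark (eventual monotonicity of $U$ gives $\liminf_{x\to\infty}\log\int_a^xU(t)\,dt\big/\int_a^xL(t)t^{-1}\,dt\geq\rho>0$) shows it would in fact be false in any non-vacuous instance. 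Your resolution by vacuity is also sound: since the section standing hypothesis is Assumption~B, $W(x)/x=1/L(x)\to0$, which is incompatible with $W'(x)\to\infty$ (and, by the same Ces\`aro/l'Hopital argument, forces $\alpha=0$ in part (i), reducing its conclusion to $\int_a^xU(t)\,dt\sim\rho^{-1}W(x)U(x)\in\mathcal{M}_0^{+}(L,\rho)$). So your write-up is more careful than the paper's: it proves everything the paper actually proves, and in addition exposes that case (ii) and the case $\alpha>0$ of (i) are empty under Assumption~B, so that the membership assertion in (ii) holds only vacuously. If the authors intended (ii) to cover functions $L$ with $L(x)\to0$, the conclusion would have to be restated for the class $\mathcal{M}_0(L,\cdot)$ or $\mathcal{M}_0^{-}(L,\cdot)$, not $\mathcal{M}_0^{+}(L,0)$; as written, your reading is the only consistent one.
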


\begin{proof}
(i) Clearly $U(x)\underset{x\to\infty}{\thicksim} \frac1\rho \,W(x)U^{\prime }(x)\; \Leftrightarrow \; \frac{U'(x)}{U(x)}\,\underset{x\to\infty}{\thicksim} \,\rho \,x^{-1}L(x)$; it  implies (as already seen in Section 2.2, Example 5) that $\displaystyle U\in \M_{0}^{+}(L,\rho )$, 
$ U(x)\underset{x\to\infty}{\rightarrow} \infty $ 
and there exists $a$ such that \, $\displaystyle \int_{a}^{x}U(t)dt \underset{x\to\infty}{\thicksim} \frac{1}{\rho }\int_{a}^{x}W(t)U^{\prime }(t)dt $.\\
Now consider 
$\displaystyle  R(x)=\frac{W(x)U(x)}{\int_{a}^{x}W(t)U^{\prime }(t)dt}$.
Applying l'Hopital's rule, we obtain that 
$$
\lim_{x\rightarrow \infty }R(x) =\lim_{x\rightarrow \infty }\frac{W(x)U^{\prime }(x)+W^{\prime }(x)U(x)}{W(x)U^{\prime }(x)} 
= 1+\rho ^{-1}\lim_{x\rightarrow \infty }W^{\prime }(x)=1+\alpha / \rho.
$$
It follows that
$\displaystyle \int_{a}^{x}U(t)dt \underset{x\to\infty}{\thicksim} \frac{W(x)U(x)}{\rho (1+\alpha/\rho) }$, hence $\displaystyle\int_{a}^{x} \!\!\! U(t)dt\in \mathcal{M}_{0}^{+}(L,\rho +\alpha )$, when applying  Example 5 in Section 2.2 for the function $\displaystyle \int_{a}^{x}U(t)dt$.

(ii) If $W^{\prime }(x)\underset{x\to\infty}{\rightarrow} \infty$, then $R(x)\!\!\underset{x\to\infty}{\to}\!\!\infty$ and $\displaystyle \frac{\int_{a}^{x}U(t)dt }{W(x)U(x) }\underset{x\to\infty}{\thicksim} \frac1{\rho\,R(x)}\underset{x\to\infty}{\rightarrow} 0$.
\end{proof}

\begin{remark} ~\\[-5ex]
\begin{enumerate}
\item A similar result holds for the case where $\rho <0$ and the integral $\displaystyle\int_{x}^{\infty }U(t)dt$.
\item Assume \;$\displaystyle \frac{x\,W^{\prime }(x)}{W(x)}\underset{x\to\infty}{\rightarrow}\beta $ with $0<\beta <1$. Then $W\in RV_{\beta }$ and \\$\displaystyle W^{\prime }(x)\underset{x\to\infty}{\rightarrow} 0$.
\end{enumerate}
\end{remark}
A related result follows directly from Theorem 4.12.10 in \cite{BinghamGT1989}.
\begin{proposition}
Let $\displaystyle f(x)=\int_{0}^{x}L(t)t^{-1}dt$ and $U\in \mathcal{M}_{0}^{+}(L,\rho )$.\\[-4ex]
\begin{itemize}
\item[(i)] If $\rho >0$ and $f\in RV_{\alpha }$, $\alpha >0$, then 
$\displaystyle \log\int_{0}^{x}U(t)dt \underset{x\to\infty}{\thicksim}  \log U(x)$ and 
$\displaystyle \int_{0}^{x}U(t)dt\in \mathcal{M}_{0}^{+}(L,\rho )$.
\vspace{-3ex}
\item[(ii)] If $\rho <0$ and $f\in RV_{\alpha }$, $\alpha >0$, then $\displaystyle -\log
\int_{x}^{\infty }U(t)dt \underset{x\to\infty}{\thicksim} -\log U(x)$ and $\displaystyle \int_{x}^{\infty
}U(t)dt\in \mathcal{M}_{0}^{+}(L,\rho )$.
\end{itemize}
\end{proposition}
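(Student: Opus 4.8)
The plan is to obtain both parts as essentially immediate consequences of Theorem~4.12.10 in \cite{BinghamGT1989}, which compares $\log$ of a primitive (resp. of a tail integral) with $\log$ of the integrand whenever the latter's logarithm is asymptotic to a regularly varying function of positive index; the only work is to put the hypothesis on $U$ into that shape and to handle the fact that $U$ itself is not assumed monotone.

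First I would do the reduction. Put $f_a(x):=\int_a^x L(t)t^{-1}\,dt$, so that $U\in\mathcal{M}_0^+(L,\rho)$ means precisely $\log U(x)/f_a(x)\to\rho$. The hypothesis $f\in RV_\alpha$ tacitly requires $\int_0^x L(t)t^{-1}\,dt<\infty$ for $x>0$, so $f-f_a$ equals the constant $\int_0^a L(t)t^{-1}\,dt$; since Assumption~B forces $f_a(x)\to\infty$, we get $f_a\sim f\in RV_\alpha$, hence $\log U(x)\sim\rho f(x)$ with $\rho f\in RV_\alpha$, $\alpha>0$ (and of the sign of $\rho$). This is exactly the input required by Theorem~4.12.10.

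Since $U$ need not be monotone, I would feed the monotone two-sided bounds of $U$ into the Tauberian theorem. By the characterization Theorem~\ref{theo:16}, for each small $\epsilon>0$ there is $x_\epsilon$ with $V^{\rho-\epsilon}(t)\le U(t)\le V^{\rho+\epsilon}(t)$ for $t\ge x_\epsilon$, where $V$ is defined in \eqref{V} and, under Assumption~B, $V$ is eventually increasing with $V(x)\to\infty$; thus $V^{\rho\pm\epsilon}$ is eventually monotone and $\log V^{\rho\pm\epsilon}(x)=(\rho\pm\epsilon)f_a(x)\sim(\rho\pm\epsilon)f(x)\in RV_\alpha$. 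For part~(i) ($\rho>0$, $0<\epsilon<\rho$) I integrate these bounds over $[0,x]$, absorb the finite contribution of $[0,x_\epsilon]$ (negligible on the logarithmic scale since $\int_0^x V^{\rho\pm\epsilon}\to\infty$), and apply Theorem~4.12.10 to $V^{\rho\pm\epsilon}$ to get $\log\int_0^x V^{\rho\pm\epsilon}(t)\,dt\sim(\rho\pm\epsilon)f(x)$; letting $\epsilon\downarrow0$ in the sandwich gives $\log\int_0^x U(t)\,dt\sim\rho f(x)\sim\log U(x)$, and dividing by $f_a(x)\sim f(x)$ yields $\int_0^x U(t)\,dt\in\mathcal{M}_0^+(L,\rho)$. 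For part~(ii) ($\rho<0$, $0<\epsilon<-\rho$) the bounds $V^{\rho\pm\epsilon}$ are eventually decreasing, and since $f_a\in RV_\alpha$ with $\alpha>0$ forces $f_a(t)\ge t^{\alpha/2}$ eventually, the tails $\int_x^\infty V^{\rho\pm\epsilon}(t)\,dt$, hence $\int_x^\infty U(t)\,dt$, are finite; the tail form of Theorem~4.12.10 then gives $-\log\int_x^\infty V^{\rho\pm\epsilon}(t)\,dt\sim-(\rho\pm\epsilon)f(x)$, and sandwiching and letting $\epsilon\downarrow0$ gives $-\log\int_x^\infty U(t)\,dt\sim-\rho f(x)\sim-\log U(x)$ together with $\int_x^\infty U(t)\,dt\in\mathcal{M}_0^+(L,\rho)$.

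The only genuine difficulty is bookkeeping, not substance: checking that the truncated pieces near the lower limit (and the behaviour of $f$ near $0$) are truly $o(f(x))$ on the logarithmic scale, and that the monotonicity needed to invoke Theorem~4.12.10 is supplied by $V$, since $U$ is assumed only measurable and bounded on finite intervals.
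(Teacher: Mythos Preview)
Your proof is correct and follows the same route as the paper, which simply invokes Theorem~4.12.10 of \cite{BinghamGT1989} without further detail. The sandwich via $V^{\rho\pm\epsilon}$ is an extra precaution---that theorem in BGT requires only local integrability of the integrand (guaranteed here since $U$ is bounded on finite intervals), not monotonicity---so you could apply it directly to $U$, but the added care does no harm.
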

It seems to be quite difficult to consider integrals of functions in the class $\mathcal{M}_{0}^{+}(L,\rho )$ without extra conditions.

%
%

\section{The class $M_{1}(L,\protect\rho )$}
\label{sec:M1}

In this section, we are interested in studying a subclass of the large class $\mathcal{M}(\infty )$ of positive and measurable functions for which $\displaystyle \lim_{x\rightarrow \infty }\frac{\log U(x)}{\log x}=\infty$.  

We consider positive and measurable functions $U$ so that
\begin{equation} \label{9}
\lim_{x\rightarrow \infty }\frac{\log U(x)}{\int_{a}^{x}b^{-1}(t)dt}=\rho \in \R\text{,} 
\end{equation}
where $b(.)$ is a suitable function such that 
$\displaystyle b(x)/x \, \underset{x\to\infty}{\rightarrow} \,0$, to ensure that $U$ satisfying \eqref{9} belongs to the class $\mathcal{M}(\infty )$. 

Compared with \eqref{3}, we have $b^{-1}(x)=x^{-1}L(x)$ so that $L(x)\underset{x\to\infty}{\rightarrow} \infty $.

\begin{Example} Let us give examples of functions of $\mathcal{M}(\infty )$ satisfying \eqref{9}.\\[-4ex]
\begin{enumerate}
\item Let $U$ defined by $U(x)=\alpha e^{-\alpha x}$. Then  $\displaystyle \frac{\log U(x)}{x} \underset{x\to\infty}{\rightarrow} -\alpha $, and \eqref{9} holds with $b^{-1}(x)=1$.
\item Let $U$ be the standard normal density ($U(x)=c\,e^{-x^{2}/2}$). Then\\  
$\displaystyle \frac{\log U(x)}{x^{2}} \underset{x\to\infty}{\rightarrow} -\frac12$ and $b^{-1}(x)=x$ gives \eqref{9}.
\item Let $U(x)=e^{\left[ x\right] \log x}$, then $\displaystyle \frac{\log U(x)}{x\log x} \underset{x\to\infty}{\rightarrow} 1$ and  \eqref{9} holds with $b^{-1}(x)=\log x$.
\end{enumerate}
\end{Example}

In the next section we discuss the suitable functions $b(.)$ that we will
use.

\subsection{Definitions and Examples}

Let us recall a few definitions (see \cite{BinghamGT1989}, Sections 2.11 \& 3.10, \cite{BinghamO2014}, or \cite{Omey13}).
\begin{definition}
The positive and measurable function $b$ is called
'self-neglecting',  denoted by  $b\in SN$, if 
$$
b(x)/x\underset{x\to\infty}{\rightarrow} 0\quad\text{and}\quad
\lim_{x\rightarrow \infty }\frac{b(x+y\,b(x))}{b(x)}=1 \quad\text{locally uniformly in} \; y.
$$
\end{definition}
Note that 
$$
b\in SN \;\Rightarrow \; \lim_{x\to\infty}\int_{a}^{x}b^{-1}(t)dt =\infty 
\qquad \text{and}\qquad \lim_{x\to \infty} b^{\prime}(x)=0 \;\Rightarrow \; b\in SN.
$$

\begin{definition}~\\[-4ex]
\begin{itemize}
\item The positive and measurable function $f$ belongs to the class $\Gamma (b)$ if \\
$b\in SN$ and $f$ satisfies $\displaystyle \lim_{x\to\infty} \frac{f(x+yb(x))}{f(x)}=e^{y}$, for all $y$.
\item The positive and measurable function $f$ belongs to the class $\Gamma _{-}(b)$
if \\$1/f\in \Gamma (b)$.
\end{itemize}
\end{definition}

We can derive straightforward properties (see \cite{BinghamGT1989}, Section 3.10, and \cite{deHaan}).
\begin{Property}~\\[-4ex]
\begin{enumerate}
\item If $f\in \Gamma (b)$, then $\displaystyle \int_{a}^{x}f(t)dt\in \Gamma (b)$ and  $\displaystyle \int_{a}^{x}f(t)dt \underset{x\to\infty}{\thicksim} b(x)f(x)$. 
\item If $f\in \Gamma _{-}(b)$, then $\displaystyle \int_{x}^{\infty}f(t)dt\in \Gamma _{-}(b)$ and $\displaystyle\int_{x}^{\infty }f(t)dt \underset{x\to\infty}{\thicksim} b(x)f(x)$.
\item If $f\in \Gamma (b)$ then \eqref{9} holds locally uniformly in $y$ (the proof can be adapted from \cite{Omey13} and \cite{BinghamO2014}).
\end{enumerate}
\end{Property}

From Bingham et al. (see\cite{BinghamGT1989}, Theorem 3.10.8) or Omey (see \cite{Omey13}), we have the following representation theorem for functions in the class $\Gamma (b)$.
\begin{theorem}
$f\in \Gamma (b)$ for some function $b\in SN$ if and only if $f$ can be written as\;
$\displaystyle  f(x)=\exp \left\{ \alpha (x)+\int_{a}^{x}\frac{\beta (t)}{c(t)}dt\right\} $,
with\; $\alpha (x)\underset{x\to\infty}{\to} \alpha\in\R$, \\ $\beta (x)\underset{x\to\infty}{\to}  1
$ and where $c(.)$ is a positive, absolutely continuous function such that  $c^{\prime }(x) \underset{x\to\infty}{\to} 0$, and $c(x)\underset{x\to\infty}{\thicksim} b(x)$.
\end{theorem}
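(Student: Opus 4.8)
The statement to prove is the representation theorem: $f \in \Gamma(b)$ for some $b \in SN$ if and only if $f$ can be written as $f(x) = \exp\{\alpha(x) + \int_a^x \beta(t)/c(t)\,dt\}$ with $\alpha(x) \to \alpha \in \R$, $\beta(x) \to 1$, and $c$ positive, absolutely continuous, $c'(x) \to 0$, $c(x) \thicksim b(x)$. Since the paper explicitly cites \cite{BinghamGT1989}, Theorem 3.10.8, and \cite{Omey13}, the cleanest route is to cite those references and record the argument in skeleton form rather than reprove it from scratch. Below is the plan I would follow to reconstruct it.

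The plan is to prove the two implications separately. For the sufficiency (``if'') direction, I would start from the representation $f(x) = \exp\{\alpha(x) + \int_a^x \beta(t)/c(t)\,dt\}$ and show directly that $f \in \Gamma(c)$, hence $f \in \Gamma(b)$ since $c \thicksim b$ and membership in $\Gamma$ depends only on the asymptotic equivalence class of the auxiliary function. The computation is: $\log f(x+y c(x)) - \log f(x) = \alpha(x+yc(x)) - \alpha(x) + \int_x^{x+yc(x)} \beta(t)/c(t)\,dt$. The first difference tends to $0$ because $\alpha$ converges. For the integral, substitute $t = x + s c(x)$, giving $\int_0^y \beta(x+sc(x)) \cdot c(x)/c(x+sc(x))\,ds$; since $c' \to 0$ implies $c \in SN$ (as noted in the excerpt, $\lim b'(x)=0 \Rightarrow b \in SN$), the ratio $c(x)/c(x+sc(x)) \to 1$ locally uniformly in $s$, and $\beta(x+sc(x)) \to 1$, so by dominated convergence the integral tends to $\int_0^y 1\,ds = y$. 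Hence $f(x+yc(x))/f(x) \to e^y$, locally uniformly, which is exactly $f \in \Gamma(c) = \Gamma(b)$.

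For the necessity (``only if'') direction, suppose $f \in \Gamma(b)$ with $b \in SN$. The idea is to build $c$ from $f$ itself. First, by Property 1 above (applied in the excerpt), $F(x) := \int_a^x f(t)\,dt \in \Gamma(b)$ and $F(x) \thicksim b(x) f(x)$, so one can replace $f$ by the smoother $F$ and set $c(x) := F(x)/f(x) \thicksim b(x)$; because $F$ is an integral of a positive function, $F$ is absolutely continuous and one checks $c$ is absolutely continuous with $c'(x) = 1 - F(x) f'(x)/f(x)^2$ — but to control $c'$ one iterates the smoothing, or more robustly one invokes the existence of a smooth ($C^\infty$ or absolutely continuous with $c'\to 0$) version of the auxiliary function, which is a standard fact for $SN$ functions (de Haan's ``smoothing'' of self-neglecting functions; see \cite{deHaan} and \cite{BinghamGT1989}, Section 3.10). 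With such a $c$ in hand, define $\phi(x) := \log f(x)$; the $\Gamma(b)$ property gives $\phi(x+yc(x)) - \phi(x) \to y$ locally uniformly, which is the statement that $\phi$ has ``asymptotic derivative'' $1/c(x)$. One then writes $\phi(x) = \phi(a) + \int_a^x \phi'(t)\,dt$ (after passing to an absolutely continuous version of $\phi$ via the integral-smoothing step) and sets $\beta(t) := c(t)\phi'(t)$, so that $\phi(x) = \phi(a) + \int_a^x \beta(t)/c(t)\,dt$; the content is that $\beta(t) \to 1$, which follows from the uniform convergence $\phi(x+yc(x)) - \phi(x) \to y$ by dividing by $y$ and letting the increment shrink. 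Finally one absorbs any lower-order discrepancy into $\alpha(x)$, which is shown to converge to a finite limit.

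The main obstacle is the necessity direction, and specifically the construction of an auxiliary function $c$ that is simultaneously absolutely continuous, satisfies $c'(x) \to 0$, and is asymptotically equivalent to $b$ — this requires the non-trivial smoothing lemma for self-neglecting functions rather than a one-line computation. Given that the result is quoted verbatim from \cite{BinghamGT1989}, Theorem 3.10.8 and \cite{Omey13}, in the paper I would simply state the theorem and reference those sources for the proof; the sketch above indicates how one would fill it in if required. I would also remark that the sufficiency direction, as computed above, is short and self-contained given the facts already recorded in the excerpt (namely $c' \to 0 \Rightarrow c \in SN$ and the locally uniform ratio convergence it entails).
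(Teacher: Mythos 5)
Your proposal matches the paper exactly: the paper states this representation theorem without proof, attributing it to \cite{BinghamGT1989} (Theorem 3.10.8) and \cite{Omey13}, which is precisely the route you take. Your supplementary sketch is sound --- the sufficiency computation via the substitution $t=x+sc(x)$ is correct, and you rightly identify the smoothing of the self-neglecting auxiliary function as the only genuinely non-trivial step in the necessity direction --- but none of it is needed to align with the paper, which simply quotes the result.
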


\begin{Example}~\\[-4ex]
\begin{enumerate}
\item The exponential density $f(x)=\alpha e^{-\alpha x}$ belongs to
the class $\Gamma _{-}(1/\alpha )$.
\item The standard normal density $f(x)=\frac1{\sqrt{2\pi}}\,e^{ -x^{2}/2}$ belongs to the
class $\Gamma _{-}(1/x)$.
\item $U(x)=e^{\left[ x\right] \log x}\in \mathcal{M}(-\infty )$ but $U\notin \Gamma $ (see Example 1.8 in \cite{CadenaT-2016}).
\end{enumerate}
\end{Example}

\begin{definition} Let $b\in SN$. The class $\mathcal{M}_{1}(b,\rho )$
is the set of positive and measurable functions $U$ such that \eqref{9} holds.
\end{definition}

Note that if $b\in SN$ and $U$ satisfies $b(x)U^{\prime }(x)/U(x)\underset{x\to\infty}{\rightarrow} \rho $, then $U\in \mathcal{M}_{1}(b,\rho )$.

\subsection{Some properties}

\subsubsection{General}

\begin{lemma}~\\
 For $\rho >0$, $\mathcal{M}_{1}(b,\rho )\subset \mathcal{M}(\infty )$, and for $\rho <0$,  $\mathcal{M}_{1}(b,\rho )\subset \mathcal{M}(-\infty )$.
\end{lemma}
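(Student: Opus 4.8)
The plan is to prove this directly from the definition \eqref{9}, exactly mirroring the L'Hopital argument already used in the proof of the proposition on $\mathcal{M}_0^+(L,\rho)$. Write $g(x)=\int_a^x b^{-1}(t)\,dt$, so that \eqref{9} says $\log U(x)/g(x)\to\rho$. Then factor
\[
\frac{\log U(x)}{\log x}=\frac{\log U(x)}{g(x)}\cdot\frac{g(x)}{\log x},
\]
so it suffices to show $g(x)/\log x\to\infty$; multiplying the limit $\rho$ by $+\infty$ then gives $+\infty$ when $\rho>0$ and $-\infty$ when $\rho<0$, which is precisely $U\in\mathcal{M}(\infty)$ resp. $U\in\mathcal{M}(-\infty)$.

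For the claim $g(x)/\log x\to\infty$ I would apply L'Hopital's rule: the ratio of derivatives is $b^{-1}(x)\big/(1/x)=x/b(x)$, and the defining property $b\in SN$ (or merely $b(x)/x\to 0$) gives $x/b(x)\to\infty$. One should note $g(x)\to\infty$ (listed right after the definition of $SN$: $b\in SN\Rightarrow\int_a^x b^{-1}(t)\,dt\to\infty$) so the hypotheses of L'Hopital are genuinely met; since both numerator and denominator tend to $+\infty$ and the derivative ratio has a limit, L'Hopital applies. This is the only real step, and it is short.

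The one point requiring a word of care — and the closest thing to an obstacle — is the arithmetic of the limit $\rho\cdot(+\infty)$ when $\rho$ could be any nonzero real: for $\rho>0$ the product is $+\infty$, for $\rho<0$ it is $-\infty$. A clean way to phrase it without ambiguity is to say that for any $\epsilon\in(0,|\rho|)$ there is $x_0$ with $\log U(x)/g(x)\geq\rho-\epsilon$ (if $\rho>0$) for $x\geq x_0$, hence $\log U(x)/\log x\geq(\rho-\epsilon)\,g(x)/\log x\to+\infty$; the case $\rho<0$ is symmetric using an upper bound $\rho+\epsilon<0$. This handles the possibility that $U$ is not eventually monotone or that $\log U$ changes sign, which is why one argues with inequalities rather than directly multiplying limits. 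No other subtlety arises, and the two cases $\rho>0$ and $\rho<0$ are completely parallel.
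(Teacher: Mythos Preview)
Your proof is correct and follows essentially the same route as the paper: factor $\log U(x)/\log x$ through $g(x)=\int_a^x b^{-1}(t)\,dt$, then use L'Hopital together with $b(x)/x\to 0$ to get $g(x)/\log x\to\infty$. The paper's version is terser (it simply writes the product of limits as $\rho\times\lim x/b(x)$ without your $\varepsilon$-justification), but the argument is identical.
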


\begin{proof}
We have
$$
\lim_{x\to\infty} \frac{\log U(x)}{\log x} =\lim_{x\to\infty}  \frac{\log U(x)}{\int_{a}^{x}b^{-1}(t)dt}\times \frac{\int_{a}^{x}b^{-1}(t)dt}{\log x} =\rho \times \lim_{x\to\infty}  \frac{\int_{a}^{x}b^{-1}(t)dt}{\log x}=
\rho \times \lim_{x\to\infty}\frac{x}{b(x)}. 
$$
Since $x/ b(x) \underset{x\to\infty}{\to}\infty$, the result follows. 
\end{proof}

In the next result we collect some algebraic results.
\begin{lemma}\label{lem:20}
Assume that $U\in \mathcal{M}_{1}(b_{1},\rho _{1})$.
\begin{itemize}
\item[(i)] If $V\in \mathcal{M}_{1}(b_{2},\rho _{2})$ and $b_{1}(x)/b_{2}(x) \underset{x\to\infty}{\to} c$, then $UV\in \mathcal{M}_{1}(b_{1},\rho _{1}+c\rho _{2})$ and 
$U/V\in \mathcal{M}_{1}(b_{1},\rho _{1}-c\rho _{2})$.
\item[(ii)] If $V\in RV_{\alpha }$, then $UV\in \mathcal{M}_{1}(b_{1},\rho _{1})$.
\item[(iii)] If $\rho _{1}\neq 0$ and $V\in \mathcal{M}_{1}(b_{2},\rho _{2})$
with $b_{2}$ bounded and $b_{1}(x)/b_{2}(x)\underset{x\to\infty}{\to} 0$, then
 $V(x)/U(x)\underset{x\to\infty}{\to} 0$ and $U+V\in \mathcal{M}_{1}(b_{1},\rho _{1})$.
\end{itemize}
\end{lemma}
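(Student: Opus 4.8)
The plan is to reduce all three parts to the defining relation \eqref{9} after expanding the logarithm of a product, quotient or sum, the remaining work being to compare the normalising integrals via l'Hopital's rule. Throughout put $I_{i}(x):=\int_{a}^{x}b_{i}^{-1}(t)\,dt$ for $i=1,2$; since $b_{i}\in SN$ one has $I_{i}(x)\to\infty$, so l'Hopital applies to any $\infty/\infty$ quotient formed from the $I_{i}$, and since $b_{i}(x)/x\to 0$ one gets $I_{i}(x)/\log x\to\infty$, equivalently $\log x/I_{i}(x)\to 0$. For (i), write $\log\big(U(x)V(x)\big)=\log U(x)+\log V(x)$ and divide by $I_{1}(x)$: the first term tends to $\rho_{1}$, and in $\frac{\log V(x)}{I_{1}(x)}=\frac{\log V(x)}{I_{2}(x)}\cdot\frac{I_{2}(x)}{I_{1}(x)}$ the first factor tends to $\rho_{2}$ while l'Hopital yields $\frac{I_{2}(x)}{I_{1}(x)}\to\lim\frac{b_{2}^{-1}(x)}{b_{1}^{-1}(x)}=\lim\frac{b_{1}(x)}{b_{2}(x)}=c$; hence the sum tends to $\rho_{1}+c\rho_{2}$, and the statement for $U/V$ follows by replacing $\log V$ with $-\log V$.

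For (ii) I again split $\log\big(U(x)V(x)\big)=\log U(x)+\log V(x)$ and divide by $I_{1}(x)$; the point is that a regularly varying factor contributes only a term of order $\log x$. By the representation theorem for $RV_{\alpha}$ (\cite{BinghamGT1989}, Theorem~1.3.1, together with Karamata's theorem) one has $\log V(x)/\log x\to\alpha$, so $\frac{\log V(x)}{I_{1}(x)}=\frac{\log V(x)}{\log x}\cdot\frac{\log x}{I_{1}(x)}\to\alpha\cdot 0=0$, and the whole quotient tends to $\rho_{1}$, i.e. $UV\in\mathcal{M}_{1}(b_{1},\rho_{1})$.

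For (iii) the first and crucial step is a size comparison. From $b_{1}(x)/b_{2}(x)\to 0$ and l'Hopital, $I_{1}(x)/I_{2}(x)\to\lim b_{2}(x)/b_{1}(x)=\infty$, so $I_{2}(x)=o(I_{1}(x))$; hence $\log V(x)=(\rho_{2}+o(1))I_{2}(x)=o(I_{1}(x))$, while $\log U(x)=(\rho_{1}+o(1))I_{1}(x)$ with $\rho_{1}\neq 0$, so $|\log U(x)|$ grows like $|\rho_{1}|I_{1}(x)$. Subtracting, $\log\frac{V(x)}{U(x)}=\log V(x)-\log U(x)=-(\rho_{1}+o(1))I_{1}(x)$, which (for $\rho_{1}>0$) tends to $-\infty$, so $V(x)/U(x)\to 0$; then $\log\big(U(x)+V(x)\big)=\log U(x)+\log\!\big(1+V(x)/U(x)\big)=\log U(x)+o(1)$, and division by $I_{1}(x)$ gives the limit $\rho_{1}$, i.e. $U+V\in\mathcal{M}_{1}(b_{1},\rho_{1})$. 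The boundedness of $b_{2}$ is used only to secure $b_{1}(x)\to 0$, which makes this comparison of $I_{1}$ with $I_{2}$ unambiguous.

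I expect the one genuinely delicate step to be this size comparison in (iii): one has to establish $V/U\to 0$ rather than merely the convergence of $\log U/I_{1}$ and $\log V/I_{2}$, and it is precisely here that $\rho_{1}\neq 0$, the strict domination $I_{2}=o(I_{1})$, and the sign of $\rho_{1}$ come into play. Parts (i), (ii) and the remaining manipulations in (iii) are routine bookkeeping with $\log(xy)=\log x+\log y$, $\log(x+y)=\log x+\log(1+y/x)$ and l'Hopital's rule.
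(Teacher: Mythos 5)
Your argument is correct and follows essentially the same route as the paper's: part (ii) is literally the paper's computation, part (i) is the same comparison of the normalising integrals via l'Hopital, and in (iii) the paper gets $V/U\to 0$ by applying (i) with $c=0$ to obtain $\log\bigl(U(x)/V(x)\bigr)=(\rho_1+o(1))\int_a^x b_1^{-1}(t)\,dt\to\infty$, which is exactly your size comparison in disguise. The one caveat --- which you half-acknowledge with your parenthetical ``for $\rho_1>0$'' and which the paper's own proof silently shares --- is that the argument for (iii) genuinely requires $\rho_1>0$: if $\rho_1<0$ then $\log\bigl(V(x)/U(x)\bigr)=-(\rho_1+o(1))\int_a^x b_1^{-1}(t)\,dt\to+\infty$, so $V/U\to\infty$ and $\log(U+V)\sim\log V=o\bigl(\int_a^x b_1^{-1}(t)\,dt\bigr)$, so the stated conclusion fails and the hypothesis $\rho_1\neq 0$ must in effect be read as $\rho_1>0$.
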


\begin{proof}~
\begin{itemize}
\item[(i)] The result follows directly from the definitions.
\item[(ii)] We have
$$
\log U(x)V(x) =\log V(x)+\log U(x)=(\alpha +o(1))\log x+(\rho _{1}+o(1))\int_{a}^{x}b_{1}^{-1}(t)dt
$$
and $ \displaystyle \lim_{x\to\infty} \frac{\log x}{\int_{a}^{x}b_{1}^{-1}(t)dt}=\lim_{x\to\infty} \frac{b_{1}(x)}{x}=0$, hence the result.
\item[(iii)] We deduce from (i) that $U/V\in \mathcal{M}_{1}(b_{1},\rho _{1})$. It
follows that 
\[
\log \frac{U(x)}{V(x)}=(\rho _{1}+o(1))\int_{a}^{x}b_{1}^{-1}(t)dt,
\]%
which, combined with $\int_{a}^{x}b_{1}^{-1}(t)dt\rightarrow \infty $ (via the assumptions), gives that 
$V(x)/U(x)\underset{x\to\infty}{\to} 0$.\\
Now  we have, using (i),
$$
\log (U(x)+V(x))=\log U(x)+\log\left(1+V(x)/U(x)\right) \underset{x\to\infty}{\thicksim} \log U(x).
$$
Since $\displaystyle\int_{a}^{x}b_{1}^{-1}(t)dt\underset{x\to\infty}{\to}  \infty $, we obtain that \,
$\displaystyle
\frac{\log (U(x)+V(x))}{\int_{a}^{x}b_{1}^{-1}(t)dt} \underset{x\to\infty}{\to} \rho _{1}$.\\[-4ex]
\end{itemize}
\end{proof}

\subsubsection{Integrals}

Here we discuss integrals of functions in the class $\mathcal{M}_{1}(b,\rho )$. Note that we can alter the auxiliary function $b$ so that it satisfies $b^{\prime }(x)\underset{x\to\infty}{\to}  0$.

\begin{lemma}\label{lem:integral}
Assume that $U\in \mathcal{M}_{1}(b,\rho )$ with $b$ such that  $b^{\prime
}(x)\underset{x\to\infty}{\to}  0$.\\
If $\rho >0$, then $\displaystyle \int_{a}^{x}U(t)dt\in \mathcal{M}_{1}(b,\rho )$, and if $\rho <0$, then $\displaystyle \int_{x}^{\infty }U(t)dt\in \mathcal{M}_{1}(b,\rho) $.
\end{lemma}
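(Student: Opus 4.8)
\textbf{Proof plan for Lemma~\ref{lem:integral}.}
The plan is to treat the two cases $\rho>0$ and $\rho<0$ in parallel, reducing each to an application of l'Hopital's rule after controlling the integral by the bounds coming from the characterization of $\mathcal{M}_{1}(b,\rho)$. First I would use the hypothesis $U\in \mathcal{M}_{1}(b,\rho)$ together with Lemma~\ref{lem:meitner} (applied with $g(x)=\int_{a}^{x}b^{-1}(t)\,dt$, which tends to $\infty$ since $b\in SN$): for every $\epsilon>0$ there is $x_\epsilon$ such that, for $x\ge x_\epsilon$,
\[
\exp\Big\{(\rho-\epsilon)\!\int_{a}^{x}\!b^{-1}(t)\,dt\Big\}\le U(x)\le \exp\Big\{(\rho+\epsilon)\!\int_{a}^{x}\!b^{-1}(t)\,dt\Big\}.
\]
Write $G(x):=\int_{a}^{x}b^{-1}(t)\,dt$, so $G'(x)=1/b(x)$ and $G(x)\to\infty$.

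For the case $\rho>0$, I would integrate the chain of inequalities from $x_\epsilon$ to $x$, separating off the fixed contribution $\int_{a}^{x_\epsilon}U(t)\,dt=O(1)$, to get
\[
\int_{a}^{x}U(t)\,dt \;\le\; O(1)+\int_{x_\epsilon}^{x}\exp\{(\rho+\epsilon)G(t)\}\,dt,
\]
and similarly with $\rho-\epsilon$ from below. The key computation is the asymptotics of $\int^{x}\exp\{\lambda G(t)\}\,dt$ for $\lambda>0$: since $\frac{d}{dt}\exp\{\lambda G(t)\}=\frac{\lambda}{b(t)}\exp\{\lambda G(t)\}$, l'Hopital's rule gives
\[
\frac{\int_{x_\epsilon}^{x}\exp\{\lambda G(t)\}\,dt}{b(x)\exp\{\lambda G(x)\}}
\;\longrightarrow\;\frac{1}{\lambda},
\]
where the condition $b'(x)\to 0$ is exactly what makes the derivative of the denominator behave like $\frac{\lambda}{b(x)}\exp\{\lambda G(x)\}$ (the $b'(x)\exp\{\lambda G(x)\}$ term is negligible compared with $\frac{\lambda}{b(x)}\exp\{\lambda G(x)\}$). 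Hence $\int_{x_\epsilon}^{x}\exp\{\lambda G(t)\}\,dt\sim \lambda^{-1}b(x)\exp\{\lambda G(x)\}$, and taking logarithms,
\[
\log\!\int_{x_\epsilon}^{x}\exp\{\lambda G(t)\}\,dt = \lambda G(x)+\log b(x)+O(1).
\]
Since $b(x)=o(x)$ and $G(x)\to\infty$ with $G(x)/\log x\to\infty$, the term $\log b(x)=o(G(x))$ (indeed $\log b(x)\le \log x = o(G(x))$), so $\log\int_{x_\epsilon}^{x}\exp\{\lambda G(t)\}\,dt\sim \lambda G(x)$. Applying this with $\lambda=\rho\pm\epsilon$ to the upper and lower bounds for $\int_{a}^{x}U(t)\,dt$ yields
\[
(\rho-\epsilon)+o(1)\;\le\;\frac{\log\int_{a}^{x}U(t)\,dt}{G(x)}\;\le\;(\rho+\epsilon)+o(1),
\]
and letting $\epsilon\downarrow 0$ gives $\int_{a}^{x}U(t)\,dt\in\mathcal{M}_{1}(b,\rho)$.

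For the case $\rho<0$, I would run the same argument on the tail: $\int_{x}^{\infty}\exp\{\lambda G(t)\}\,dt$ with $\lambda=\rho\pm\epsilon<0$ converges (since $\exp\{\lambda G(t)\}$ decays fast enough once $G(t)\to\infty$ with $b(t)=o(t)$), and l'Hopital gives $\int_{x}^{\infty}\exp\{\lambda G(t)\}\,dt\sim -\lambda^{-1}b(x)\exp\{\lambda G(x)\}$, so again the logarithm is $\sim\lambda G(x)$; sandwiching $\int_{x}^{\infty}U(t)\,dt$ between the two bounds and letting $\epsilon\downarrow 0$ finishes it. The main obstacle is the asymptotic evaluation of $\int\exp\{\lambda G(t)\}\,dt$ via l'Hopital and, in particular, verifying that the $\log b(x)$ correction is asymptotically negligible against $G(x)$ — this is where the structural hypotheses $b\in SN$ (hence $b(x)=o(x)$ and $G(x)/\log x\to\infty$) and $b'(x)\to 0$ are used; the rest is bookkeeping with the $\epsilon$-bounds. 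Alternatively, one could phrase the $\rho>0$ case through Property~1 by noting that $\exp\{\lambda G(\cdot)\}\in\Gamma(b/\lambda)$, but the direct l'Hopital computation is cleaner since it handles both signs of $\rho$ uniformly and does not require $U$ itself to lie in a $\Gamma$-class.
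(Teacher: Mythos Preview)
Your approach is essentially the paper's: sandwich $U$ between $\exp\{(\rho\pm\epsilon)G\}$, integrate, show the bounding integrals are $\sim \lambda^{-1}b(x)\exp\{\lambda G(x)\}$, then use $\log b(x)=o(G(x))$ and let $\epsilon\downarrow 0$. The only cosmetic difference is that the paper obtains the asymptotic $\int^{x}\exp\{\lambda G(t)\}\,dt\sim \lambda^{-1}b(x)\exp\{\lambda G(x)\}$ by noting $\exp\{\lambda G(\cdot)\}\in\Gamma(b/\lambda)$ and invoking the known relation $\int_a^x f\sim b f$ for $f\in\Gamma(b)$, whereas you do a direct l'Hopital computation on the ratio; you even flag the $\Gamma$-route as an alternative, so the two write-ups are interchangeable.

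There is one genuine (if small) gap. Your justification of $\log b(x)=o(G(x))$ via ``$\log b(x)\le\log x=o(G(x))$'' is only one-sided. Nothing in the hypotheses prevents $b(x)\to 0$ --- for instance $b(x)=1/\log x$ has $b'(x)\to 0$ and $b\in SN$ --- and then $\log b(x)\to -\infty$, so you also need control from below. The clean fix, which is exactly what the paper does, is another l'Hopital step:
\[
\lim_{x\to\infty}\frac{\log b(x)}{G(x)}
=\lim_{x\to\infty}\frac{b'(x)/b(x)}{1/b(x)}
=\lim_{x\to\infty}b'(x)=0,
\]
and this is the place where the assumption $b'(x)\to 0$ is really used for this part of the argument. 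With that correction your plan goes through.
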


\begin{proof}
Let assume that $\rho>0$. If $U\in \mathcal{M}_{1}(b,\rho )$, then for each $0<\epsilon <\rho $,
there exists $x_\epsilon$ such that $\forall x\ge x_\epsilon$,
\begin{equation}\label{10}
\exp\left\{(\rho -\epsilon)\int_{a}^{x} b^{-1}(t)\, dt\right\}\;\leq U(x)\leq \; 
\exp\left\{(\rho +\epsilon)\int_{a}^{x} b^{-1}(t)\, dt\right\},
\end{equation}
from which we deduce that, for $x\ge x_\epsilon$,
\[
\int_{x_\epsilon}^{x}L(t)\,dt \;\leq \int_{x_\epsilon}^{x}U(t)\,dt \;\leq \int_{x_\epsilon}^{x}R(t)\, dt 
\]%
where  $\displaystyle L(t)=\exp\left\{(\rho -\epsilon)\int_{a}^{x} b^{-1}(t)\, dt\right\}\in \mathcal{M}(b,\rho -\epsilon )$ and\\ $\displaystyle R(t)=\exp\left\{(\rho +\epsilon)\int_{a}^{x} b^{-1}(t)\, dt\right\}\in \mathcal{M}(b,\rho +\epsilon )$.
Clearly we have 
\[
\frac{R(t+y\,b(t))}{R(t)}=\exp\left\{(\rho +\epsilon)\int_{0}^{y}\frac{b(t)}{b(t+\theta\, b(t))}d\theta \right\} \underset{x\to\infty}{\to}  e^{(\rho +\epsilon) y} ,
\]
hence $R\in \Gamma (b/(\rho +\epsilon ))$, and $\displaystyle \int_{x_\epsilon}^{x}R(t)dt\underset{x\to\infty}{\thicksim} \frac{b(x)R(x)}{\rho +\epsilon }$.\\
Now, using the definition of $R$, we have 
\[
\log \frac{b(x)R(x)}{\rho +\epsilon }=\log b(x) -\log (\rho +\epsilon ) +(\rho +\epsilon
)\int_{a}^{x}b^{-1}(t)dt.
\]
Using l'Hopital's rule provides
$\displaystyle 
\lim_{x\rightarrow \infty }\frac{\log b(x)}{\int_{a}^{x}b^{-1}(t)dt}=\lim_{x\rightarrow \infty }b^{\prime }(x)=0$.\\
It follows that $\int_{x_\epsilon}^{x}R(t)dt\in \mathcal{M}_{1}(b,\rho +\epsilon )$.

Similarly, we obtain
$\displaystyle \int_{x_\epsilon}^{x}L(t)dt\underset{x\to\infty}{\thicksim}  \frac{b(x)L(x)}{\rho -\epsilon }$ and $\displaystyle \int_{x_\epsilon}^{x}L(t)dt\in \mathcal{M}_{1}(b,\rho -\epsilon )$. 

We can conclude, via \eqref{10}, that $\int_{x_\epsilon}^{x}U(t)dt\in \mathcal{M}_{1}(b,\rho )$.

The proof when $\rho<0$ follows the same steps.
\end{proof}

\begin{remark}~\\[-4ex]
\begin{itemize}
\item[(i)] In view of Lemma~\ref{lem:20}, (ii), the result given in Lemma~\ref{lem:integral} holds when replacing $U(x)$ by $x^{\alpha }U(x)$.
\item[(ii)] The proof of Lemma~\ref{lem:integral},(i), shows that for $\varepsilon >0$, there exists $x_\varepsilon$ such that, for all $x>x_\varepsilon$, 
\[
H^{\rho -\varepsilon }(x)\leq U(x)\leq H^{\rho +\varepsilon }(x)
\]
where $\displaystyle H(x)=\exp \left\{\int_{a}^{x}b^{-1}(t)dt\right\}\in \Gamma (b)$ satisfies 
$\displaystyle b(x)H^{\prime }(x)/H(x)=1$ (see Theorem~\ref{theo:16}).
\end{itemize}
\end{remark}

\begin{Example}~\\[-4ex]
\begin{enumerate}
\item Consider the tail distribution $\overline{F}(x)=e^{-x-0.5\sin x}$. \\
We have $\log \overline{F}(x)/x\underset{x\to\infty}{\rightarrow} -1$ and $\overline{F}\in \mathcal{M}_{1}(1,-1)$. 
The density, given by $f(x)=\overline{F}(x)(1+0.5\cos x)(>0)$, belongs to $\mathcal{M}_{1}(1,-1)$. We see that $b(x)f(x)/\overline{F}(x)$ does not converge to a limit as $x\to\infty$.

Note that $\overline{F}^{n}(a_{n}x)=\exp\{-na_{n}(x+0.5\sin(a_{n}x)/a_{n})\}$.
If $na_{n}\underset{n\to\infty}{\rightarrow}  1$, then $a_{n}\underset{n\to\infty}{\rightarrow} 0$ and $\overline{F}^{n}(a_{n}x)\underset{n\to\infty}{\rightarrow} e^{-1.5x}$.
\item Consider $U(x)=e^{x^{\beta }+\cos x}$, with $\beta >1$. Then $U^{\prime}(x)=U(x)(\beta x^{\beta -1}+\sin x)$ and $b(x)U^{\prime}(x)/U(x) \underset{x\to\infty}{\rightarrow}  1$.

Note that both $U$ and $U^{\prime }\in \mathcal{M}_{1}(b^{-1},1)$, with $b^{-1}(x)=\beta x^{\beta-1}$.
\end{enumerate}
\end{Example}

\subsubsection{Inverse functions}

\begin{proposition}
Let $U\in \mathcal{M}_1(b,\rho )$ and suppose that $U$ has a
derivative $U^{\prime }$ satisfying $\displaystyle b(x)U^{\prime }(x)/U(x) \underset{x\to\infty}{\rightarrow} \rho >0$.
Then the inverse function of U, denoted $V=U^{inv}$, belongs to $\mathcal{M}_{0}(L,1/\rho )$ with 
$L$ defined by $L(x)=b(V(x))/V(x)$.
\end{proposition}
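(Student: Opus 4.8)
The plan is to pass from $U$ to its inverse $V=U^{inv}$ by the substitution $t=U(z)$ and to reduce the asserted limit to the elementary asymptotics of $\int_c^{w}z^{-1}\,dz$.

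First I would record the structural facts. Since $b\in SN$ we have $\int_a^{\infty}b^{-1}(t)\,dt=\infty$, so combining this with $U\in\mathcal{M}_{1}(b,\rho)$ and $\rho>0$ forces $\log U(x)\to+\infty$, i.e. $U(x)\to\infty$; moreover $b(x)U'(x)/U(x)\to\rho>0$ makes $U'$ positive for large $x$, so $U$ is eventually strictly increasing. Hence $V=U^{inv}$ is well defined beyond some threshold, strictly increasing, bounded on finite intervals, and $V(y)\to\infty$. In particular $L(y)=b(V(y))/V(y)$ is positive and measurable for large $y$.

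Second, I would check that $L$ satisfies Assumption~A, which is what makes the target class $\mathcal{M}_{0}(L,1/\rho)$ meaningful. Since $b(x)/x\to 0$ (as $b\in SN$) and $V(y)\to\infty$, we get $L(y)=b(V(y))/V(y)\to 0$. For the divergence of $\int^{\infty}L(t)t^{-1}\,dt$, substitute $t=U(z)$, i.e. $z=V(t)$, $dt=U'(z)\,dz$; this turns $\int_{b_0}^{y}L(t)t^{-1}\,dt$ into
\[
\int_{b_0}^{y}\frac{L(t)}{t}\,dt=\int_{c}^{V(y)}\frac{b(z)U'(z)}{z\,U(z)}\,dz,\qquad c=V(b_0).
\]
The integrand equals $z^{-1}\bigl(\rho+\varepsilon(z)\bigr)$ with $\varepsilon(z)\to 0$, so it is asymptotic to $\rho/z$, and since $\int^{\infty}z^{-1}\,dz=\infty$ the integral tends to $\infty$; thus Assumption~A holds.

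Third, which is the heart of the matter, I would establish the limit itself. Using the same change of variables and the decomposition $b(z)U'(z)/U(z)=\rho+\varepsilon(z)$,
\[
\int_{b_0}^{y}\frac{L(t)}{t}\,dt=\int_{c}^{V(y)}\frac{\rho+\varepsilon(z)}{z}\,dz=\rho\log V(y)-\rho\log c+\int_{c}^{V(y)}\frac{\varepsilon(z)}{z}\,dz .
\]
The elementary fact that $\varepsilon(z)\to 0$ implies $\int_{c}^{w}\varepsilon(z)z^{-1}\,dz=o(\log w)$ (apply l'Hopital's rule to the ratio with $\log w$) then gives $\int_{b_0}^{y}L(t)t^{-1}\,dt=\rho\log V(y)+o(\log V(y))$, hence $\log V(y)\big/\int_{b_0}^{y}L(t)t^{-1}\,dt\to 1/\rho$. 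Because under Assumption~A the lower endpoint $a$ in \eqref{3} only shifts the denominator by a bounded amount, this is precisely $V\in\mathcal{M}_{0}(L,1/\rho)$.

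The main obstacle is bookkeeping rather than depth: the substitution $t=U(z)$ requires $U$ to be absolutely continuous and strictly increasing on the relevant range — granted here by differentiability together with $U'>0$ — and $V$ is a genuine two-sided inverse only past some threshold, so every integral should start from a sufficiently large $b_0$. The only analytic input beyond the hypotheses is the lemma $\varepsilon(z)\to0\Rightarrow\int_{c}^{w}\varepsilon(z)z^{-1}\,dz=o(\log w)$, which also underlies the ``$\sim\rho/z$ integrates to $\sim\rho\log$'' step used when verifying Assumption~A.
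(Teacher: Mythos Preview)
Your proof is correct and follows essentially the same route as the paper: both hinge on the substitution $t=U(z)$ in $\int L(t)t^{-1}\,dt$ together with $b(z)U'(z)/U(z)\to\rho$, and both verify Assumption~A by the same computation. The only cosmetic difference is that the paper first computes $V'(x)\sim b(V(x))/(\rho x)$ and then invokes the differential criterion of Example~5 (Section~2.2), whereas you carry out the resulting integration directly.
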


\begin{proof}
We have
$\displaystyle  V^{\prime }(x)=\frac{1}{U^{\prime }(V(x))} $, so that
$\displaystyle V^{\prime }(x) \underset{x\to\infty}{\thicksim} \frac{b(V(x))}{\rho\,U(V(x)) }=\frac{b(V(x))}{\rho \,x}$.

It follows that, introducing  $b_0$ defined by $\displaystyle b_0(x)=xV(x)/b(V(x))$,
\[
\frac{b_0(x)b(V(x))}{xV(x)\rho } \underset{x\to\infty}{\thicksim} \frac{b_0(x)V^{\prime }(x)}{V(x)} 
\underset{x\to\infty}{\rightarrow} 1/\rho 
\]
and we see that, for $L(x)=b(V(x))/V(x)$,
$\displaystyle 
\frac{\log V(x)}{\int_{a}^{x}L(t)t^{-1}dt} \underset{x\to\infty}{\rightarrow} 1/\rho $.

Note that $L(U(x))=b(x)/x$, \, $L(x)\underset{x\to\infty}{\rightarrow} 0$ (since $b\in SN$), and \\
$\displaystyle \int_{a}^{x}L(t)t^{-1}dt=\int_{a}^{x}b(V(t))/(tV(t))dt$.
 It follows that
\[
\int_{a}^{x}L(t)t^{-1}dt=\int_{V(a)}^{V(x)}b(t)/(tU(t))U^{\prime
}(t)dt \underset{x\to\infty}{\thicksim} \rho \int_{V(a)}^{V(x)}1/t\, dt \underset{x\to\infty}{\to} \infty.
\]
Hence we obtain that $V\in \mathcal{M}_{0}(L,1/\rho )$.
\end{proof}

Note that we provided conditions to show that $U\in \mathcal{M}_{1}(b,\rho )$ implies that its inverse function $V\in \mathcal{M}_{0}(L,1/\rho )$. It is not clear if these additional assumptions can be
omitted.

\section{Concluding remarks}
The previous results easily extend to sequences. For a given sequence of
positive numbers $(b_{n})$, we can consider the class of sequences $(a_n)$ satisfying
for instance $\lim_{n\rightarrow \infty }\log
a_{n}/\sum_{k=1}^{n}k^{-1}b_{k}=\alpha $. If $(a_{n})$ is a regularly
varying sequence, we have that $\lim_{n\rightarrow \infty }\log a_{n}/\log
n=\alpha $, a constant.
\vspace{1.7ex}\\
We may also study $O-$type of results. Under Assumption~A, we may define 
$\rho _{L}$ and $\rho _{U}$ as follows:
\[
\rho _{L}=\liminf_{x\rightarrow \infty }\frac{\log U(x)}{\int_{a}^{x}L(t)t^{-1}dt}\;\text{and}\; \rho _{U}=\limsup_{x\rightarrow \infty } \frac{\log U(x)}{\int_{a}^{x}L(t)t^{-1}dt}\text{, with }\;\rho _{L}<\rho _{U}.
\]
This leads to inequalities of the form (see Corollary~\ref{cor:7}), with $V$ defined in \eqref{V},
\[
V^{\rho _{L}-\epsilon }(x)\leq U(x)\leq V^{\rho _{U}+\epsilon }(x).
\]

Finally note that many distribution functions $F$ and densities $f$ satisfy a
relation of the form%
\[
\lim_{x\rightarrow \infty }\frac{\log (1-F(x))}{\log f(x)}=1\text{.}
\]%
So it may be interesting to study functions $U$ satisfying the following relation:
\[
\lim_{x\rightarrow \infty }\frac{\log U(x)}{\log \left\vert U^{\prime
}(x)\right\vert }=1\text{.}
\]


\begin{thebibliography}{00}
\bibitem{BinghamGT1989} {Bingham, N., Goldie, C. and Teugels, J.} (1989). {Regular Variation}. Cambridge University Press
\bibitem{BinghamO2014} {Bingham, N. H.  and Ostaszewski, A. J.} (2014). Beurling slow and regular variation, {Transactions of the London Mathematical Society} {\bf 1}, 29--56
\bibitem{CadenaRevisit-2015} {Cadena, M.}, Revisiting extensions of the class of regularly varying functions, {ArXiv:1502.06488v2 [math.CA]} 
\bibitem{CadenaEstim-2015} {Cadena, M.} (2015). A simple estimator for the $\M$-index of functions in $\M$, {Hal-01142162} 
\bibitem{CadenaT-2016} {Cadena, M.} (2016). Contributions to the study of extreme behavior and applications, {Doctoral Thesis, Universit\'{e} Pierre et Marie Curie, Paris, France} 
\bibitem{CK2015} {Cadena, M. and Kratz, M.} (2015). A new extension of the class of regularly varying functions, {Hal-01181346v1} 
\bibitem{CK2016} {Cadena, M. and Kratz, M.} (2016). New results for tails of probability distributions according to their asymptotic decay. {Stat. \& Probab. Letters} {\bf 109}, 0--5 
\bibitem{CKO2017} {Cadena, M., Kratz, M and Omey, E.} (2017). On the order of functions at infinity.  {J.\ Math.\ Anal.\ Appl.} {\bf 452}(1), 109--125
\bibitem{gantert98} {Gantert, N.} (1998). Functional Erdos-Renyi Laws for Semi-Exponential Random Variables. {Ann. Probab.} {\bf 26}(3), 1356--1369.
\bibitem{deHaan} {De~Haan, L.} (1970). On regular variation and its applications to the weak convergence of sample extremes. Mathematical Centre Tracts 32
\bibitem{mimica} {Mimica, A.} (2016). Exponential decay of measures and Tauberian theorems. { J. Math.\ Anal. Appl.}{440}, 266--285
\bibitem{Omey13}{Omey, E.} (2013). On the class Gamma and related classes of functions. {Publ. de l' Inst.\ Math. Beograd, N.S.} Tome 93 (107), 1 --18
\end{thebibliography}
\end{document}